\def\th@plain{\slshape}                                        %
\newcommand{\bigcupp}{{\textstyle\bigcup}}
\newcommand{\oi}{[0,1]}
\newcommand{\Nbb}{\mathbb{N}}
\newcommand{\Zbb}{\mathbb{Z}}
\newcommand{\Qbb}{\mathbb{Q}}
\newcommand{\Rbb}{\mathbb{R}}
\newcommand{\CH}{\mathbf{CH}}
\newcommand{\MV}{\mathbf{MV}}
\newcommand{\one}{{\rm 1\mskip-4mu l}}
\newcommand{\ud }{\,\mathrm{d}}
\newcommand{\Wbar}{\overline{W}}
\newcommand{\llgroup}{$\ell$-group}
\newcommand{\llgroups}{$\ell$-groups}
\newcommand{\mfrak}{\mathfrak{m}}
\newcommand{\Luk}{\L ukasiewicz}
\newcommand{\p}{_{\ge0}}
\newcommand{\m}{^{-1}}
\newcommand{\mmax}{^{\max}}
\def\dotminus{\buildrel\textstyle\cdot\over\relbar}
\newcommand{\newword}[1]{\textsl{#1}}
\newcommand{\vect}[3]{#1_#2,\ldots ,#1_#3}
\newcommand{\angles}[1]{\langle #1 \rangle}
\newcommand{\free}[2]{\Free_{#1}(#2)}
\newcommand{\bb}[1]{\mathbf{#1}}
\newcommand{\set}[1]{\{ #1 \}}
\newcommand{\mobiusfrac}[2]{\mu\biggl(\frac{#1}{#2}\biggr)}
\newcommand{\abs}[1]{\lvert#1\rvert}
\DeclareMathSymbol{\upharpoonright}{\mathrel}{AMSa}{"16}
\let\restriction\upharpoonright
\DeclareMathOperator{\MaxSpec}{MaxSpec}
\DeclareMathOperator{\den}{den}
\DeclareMathOperator{\relint}{relint}
\DeclareMathOperator{\Free}{Free}
\DeclareMathOperator{\Hom}{Hom}
\DeclareMathOperator{\GL}{GL}
\DeclareMathOperator{\aff}{aff}
\DeclareMathOperator{\Cone}{Cone}
\DeclareMathOperator{\vol}{vol}
\theoremstyle{plain}
\newtheorem{theorem}{Theorem}[section]
\newtheorem{lemma}[theorem]{Lemma}
\newtheorem{corollary}[theorem]{Corollary}
\theoremstyle{definition}
\newtheorem{definition}[theorem]{Definition}
\newtheorem{remark}[theorem]{Remark}
\newtheorem{example}[theorem]{Example}
\begin{document}

\bibliographystyle{plain}

\sloppy

\title[Measures induced by units]{Measures induced by units}

\author[G.~Panti, D.~Ravotti]{Giovanni Panti and Davide Ravotti}
\address{Department of Mathematics\\
University of Udine\\
via delle Scienze 206\\
33100 Udine, Italy}

\begin{abstract}
The half-open real unit interval $(0,1]$ is closed under the ordinary multiplication and its residuum. The corresponding infinite-valued propositional logic has as its equivalent algebraic semantics the equational class of cancellative hoops. Fixing a strong unit in a cancellative hoop ---equivalently, in the enveloping lattice-ordered abelian group--- amounts to fixing a gauge scale for falsity. 
In this paper we show that any strong unit in a finitely presented cancellative hoop $H$ induces naturally (i.e., in a representation-independent way) an automorphism-invariant  
positive normalized linear functional on $H$. Since $H$ is representable as a uniformly dense set of continuous functions on its maximal spectrum, such functionals ---in this context usually called states--- amount to automorphism-invariant finite Borel measures on the spectrum. Different choices for the unit may be algebraically unrelated (e.g., they may lie in different orbits under the automorphism group of $H$), but our second main result shows that the corresponding measures are always absolutely continuous w.r.t.~each other, and provides an explicit expression for the reciprocal density.
\end{abstract}

\keywords{cancellative hoop, lattice-ordered abelian group, MV-algebra, strong unit, uniform distribution, Ces\`aro mean.}

\thanks{\emph{2010 Math.~Subj.~Class.}: 03G25; 06F20; 11K06.}

\maketitle

\section{Introduction}

A \newword{cancellative hoop} is an algebra $(H,+,\dotminus,0)$ satisfying
\begin{align*}
\text{commutative monoid}&\text{ identities for $+,0$}, \\
x\dotminus x &= 0, \\
(x\dotminus y)\dotminus z&= x\dotminus(y+z), \\
x+(y\dotminus x)&=y+(x\dotminus y), \\
(x+y)\dotminus y &= x.
\end{align*}
The above identities have manifold appearances, and can be seen from various points of view.

First of all, they characterize the equational theory of truncated addition. Indeed, the class $\CH$ of cancellative hoops coincides with the equational class generated by $(\Rbb\p,+,\dotminus,0)$, where $x\dotminus y=\max\{0,x-y\}$. Therefore, they suffice to capture all identities valid in the positive reals. Also, cancellative hoops are precisely the positive cones of lattice-ordered abelian groups (\newword{\llgroups} for short), the latter being the abelian groups $(G,+,-,0)$ endowed with a lattice order $(G,\land,\lor)$ on which the group translations act as order automorphisms. 
Actually, more is true: the assignment $G\mapsto G\p=\{g\in G:g\ge0\}$
is functorial, and provides a categorical equivalence between the class of all \llgroups\ (which is generated by $\Rbb$, and is equational in the language $+,-,0,\land,\lor$), and $\CH$~\cite[Theorem~1.17]{BlokFerreirim}.
Note that the lattice operations are definable in the hoop language by $x\lor y=x+(y\dotminus x)$ and $x\land y=x\dotminus(x\dotminus y)$.
The prototypical examples of \llgroups\ ---and hence, taking positive cones, of cancellative hoops--- are the sublattice-subgroups of the \llgroup\ $C(X)$ of all real-valued continuous functions on a compact Hausdorff space $X$, with componentwise operations. 

Reversing the order and changing notation provides a second point of view, of a logical nature. Let $\exp(r)=c^r$, for some arbitrarily fixed real number $0<c<1$. Then $\exp$ is an order-reversing isomorphism between $(\Rbb\p,+,\dotminus,0)$ and the residuated lattice $((0,1],\cdot,\to,1)$; here $(0,1]$ is the half-open real unit interval endowed with the usual order and multiplication $\cdot$, and $\to$ is the residuum of $\cdot$, characterized by the adjunction $z\le x\to y$ iff $z\cdot x\le y$. An easy checking gives
$x\to y=\min\{1,y/x\}$, and the cancellative hoop axioms translate to the familiar logical identities
\begin{align*}
\text{commutative monoid}&\text{ identities for $\cdot,1$}, \\
x\to x&=1, \\
z\to(y\to x)&=(z\cdot y)\to x, \\
x\cdot(x\to y)&=y\cdot(y\to x), \\
y\to(x\cdot y)&=x.
\end{align*}
The many-valued logical system defined by the above identities has the interval $(0,1]$ as its set of truth-values, and is called the \newword{logic of cancellative hoops}~\cite{estevagodohajekmontagna03}, \cite{panti07}, \cite[\S6.4]{metcalfeolivettigabbay09}; it is what remains of Hajek's product logic~\cite{hajek98} once the falsum is removed.

A third facet of the matter arises from the observation that the addition operation $+$ and its residuum $\dotminus$ capture the process of production and consumption of resources in a Petri net under the firing of a transition. In the classical setting the resources at each place of a Petri net are marked by an element of $\Zbb\p$, and this setting is readily generalizable to arbitrary cancellative hoops which are called, in this setting, Petri algebras.
See~\cite{badouelchenouguillou07} and references therein.

An element of the cancellative hoop $H$ whose multiples eventually dominate any element of $H$ is traditionally called a strong unit or, sometimes, an order unit. Since we are not dealing with the related notion of weak unit, we will simply say unit. So, explicitly, a \newword{unit} is an element $u\in H$ such that for every $h\in H$ there exists a positive integer $k$ with $h\le ku$.
An element $u$ of a hoop of the form $C(X)\p$ is a unit iff $u(x)\not=0$ for every $x\in X$ (remember that $X$ is compact Hausdorff by definition).
Units arise in all contexts cited above: they are archimedean elements in the \llgroup\ enveloping $H$, gauge scales for falsity in falsum-free product logic, and universal bounds in bounded Petri nets.
Upon defining $x\oplus y=u\land(x+y)$, $\neg x=u-x$, $x\odot y=\neg(\neg x\oplus\neg y)$, the interval $\Gamma(H,u):=\{h\in H:h\le u\}$ is an MV-algebra, namely the equivalent algebraic semantics of \Luk\ infinite-valued logic. 
In the following we assume some familiarity with \llgroups, cancellative hoops and MV-algebras, referring to~\cite{bkw}, \cite{BlokFerreirim}, \cite{CignoliOttavianoMundici00},
\cite{estevagodohajekmontagna03},
\cite{mundici11} for all unproved claims.

Let $u$ be a unit in the cancellative hoop $H$. 
A \newword{state} on the pair $(H,u)$ is a monoid homomorphism $m:H\to\Rbb\p$ such that $m(u)=1$; states on $(H,u)$ correspond in a 1--1 canonical way to states on the unital \llgroup\ enveloping $H$  and to states on the MV-algebra $\Gamma(H,u)$~\cite[Theorem~2.4]{mundici95}.
We are naturally interested in automorphism-invariant states, namely those states $m$ such that $m\circ\sigma=m$ for every automorphism $\sigma$ of $H$ that fixes $u$. This is a quite natural requirement, since its lacking makes impossible to assign an ``average truth value'' to propositions in infinite-valued \Luk\ logic in a representation-independent way. The first result in this context is~\cite[Theorem~3.4]{mundici95}, saying that integration w.r.t.~the Lebesgue measure is an automorphism-invariant state on the free MV-algebra over finitely many generators $\free{n}{\MV}$; this has been generalized in~\cite[Theorem~4.1]{mundici08} to all finitely presented MV-algebras. As a matter of fact, integration w.r.t.~the Lebesgue measure is \emph{the only} (modulo certain natural restrictions)  automorphism-invariant state on $\free{n}{\MV}$; this has been proved in~\cite{pantiinvariant} using ergodic theory, and in~\cite{marra09} using algebraic means. 

The set $M_u$ of all states on $(H,u)$ is compact convex in $\Rbb^H$;
let $E_u\subseteq M_u$ be the set of extremal states. Then $m\in E_u$ iff $m$ is a hoop homomorphism~\cite[Theorem~12.18]{goodearl86}. As in~\cite[p.~70]{goodearl86}, we say that $e\in E_u$ is \newword{discrete} if $e[H]$ is a discrete subhoop of $\Rbb\p$: this happens iff $e[H]=b\m\cdot\Zbb\p$ for a uniquely defined integer $b\ge1$, which we call the \newword{denominator} of $e$. Let $H$ be any finitely presented cancellative hoop, and let $u\in H$ be any unit.
Our first main result, Theorem~\ref{ref1}, says that for every enumeration of the discrete extremal states of $(H,u)$ according to nondecreasing denominators, the corresponding Ces\`aro mean converges, does not depend on the enumeration, and determines an automorphism-invariant state $m_u$ on $(H,u)$. The definition of $m_u$ is thus intrinsic, because it is not based on any given representation of $H$. Of course \emph{the proof} of the above properties is heavily based on the representation theory of \llgroups\ (as well as on tools new in this circle of ideas, namely uniform distribution theory and the Ehrhart theory).

Let now a representation of $H$ as a separating subhoop of some $C(X)\p$ be given. Contrary to standard usage, we do not assume that the elements of $H$ are represented by piecewise-linear functions on $X$, nor that $u$ is represented by the constant function $\one$ (note however that the representing  space $X$ is determined by~$H$ up to homeomorphism; see~\S\ref{ref3}).
By Lemma~\ref{ref4}(iv) the state $m_u$ defined by Theorem~\ref{ref1} corresponds to a unique Borel finite measure $\mu_u$ on $X$. Let now $v$ be any other unit of $H$, and let $m_v, \mu_v$ be the corresponding state and measure. In contrast with the vector lattice case~\cite[Lemma~4.2]{beynon75}, the automorphism group of $H$ does not act transitively on the set of units, so $u$ and $v$ may be algebraically unrelated. Nevertheless, our second main result, Theorem~\ref{ref2}, says that $\mu_u$ and $\mu_v$ are absolutely continuous w.r.t.~each other, and computes explicitly the reciprocal density in terms of $u,v$, and the dimension of $X$ ---again, a number intrinsically determined by $H$.

We thank the referee for his attentive reading and detailed comments on a previous version of this paper.

\section{Representations and relative volume}\label{ref3}

We fix notation by recalling a few basic definitions and facts; since the categorical equivalence between \llgroups\ and cancellative hoops preserves all the usual notions of homomorphism, subalgebra, dual space, $\ldots$, we shall use freely well-known facts from the theory of \llgroups.

The \newword{maximal spectrum} of the cancellative hoop $H$ is the space $\MaxSpec H$ whose elements are the maximal ideals of $H$ (i.e., the kernels of the hoop homomorphisms from $H$ to $\Rbb\p$), endowed with the hull-kernel topology, namely the topology generated by all sets of the form $\set{\mfrak\in\MaxSpec H:h\notin\mfrak}$, for $h$ varying in $H$.
All hoops we consider are representable in the following restricted sense: $H$ is \newword{representable} if it is nonzero and there exists an injective hoop homomorphism $\rho:H\to C(X)\p$, for some compact Hausdorff space $X$, such that 
for every $x\not=y\in X$ there exists $h\in H$ with $(\rho h)(x)=0$ and $(\rho h)(y)\not=0$.

\begin{lemma}
Let\label{ref4} $H$ be a cancellative hoop. Then:
\begin{itemize}
\item[(i)] $H$ is representable iff it contains a unit and 
$\bigcap\MaxSpec H=0$.
\item[(ii)] Let $\rho:H\to C(X)\p$ be a representation and $u\in H$ a unit. Then the three spaces $E_u$, $\MaxSpec H$, $X$ are canonically homeomorphic via the mappings
\begin{align*}
X\ni x&\mapsto \mfrak_x=\set{h\in H:(\rho h)(x)=0}\in\MaxSpec H; \\
\MaxSpec H\ni\mfrak&\mapsto\bigl(\textnormal{the unique hoop homomorphism $e:H\to\Rbb\p$} \\
& \qquad \textnormal{whose kernel is $\mfrak$ and such that $e(u)=1$}\bigr)\in E_u; \\
X\ni x&\mapsto \bigl(h\mapsto (\rho h)(x)/(\rho u)(x)\bigr)\in E_u.
\end{align*}
\item[(iii)] Given two representations $\rho_i:H\to C(X_i)\p$, for $i=1,2$, let $R:X_1\to X_2$ be the homeomorphism obtained by composing the maps in (ii) (this amounts to saying that $Rx$ is the unique point in $X_2$ such that, for every $h\in H$, $(\rho_1h)(x)=0$ iff $(\rho_2h)(Rx)=0$). Then there exists a unit $f\in C(X_1)\p$ such that, for every $h\in H$,
$$
\rho_1h=f\cdot\bigl((\rho_2h)\circ R\bigr).
$$
Moreover, $R$ is the only homeomorphism $:X_1\to X_2$ satisfying this property.
\item[(iv)] Let $\rho,u$ be as in (ii). Then $\rho$ induces a canonical bijection between the set of states on $(H,u)$ and the set of regular Borel  measures on $X$ (since $X$ is compact, such measures are necessarily finite). If $H$ is countable, all Borel measures on $X$ are regular.
\end{itemize}
\end{lemma}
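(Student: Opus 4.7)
The plan is to prove the four items in order, with (ii) as the linchpin. For (i), the forward direction is a compactness argument: in a representation $\rho:H\to C(X)\p$ the zero sets $\{(\rho h)=0\}$ have empty total intersection by separation, so by compactness finitely many $h_1,\ldots,h_n\in H$ have empty common zero, and their sum is then a unit of $H$ (the inequality $\rho h'\le k(\rho h_1+\cdots+\rho h_n)$ pulls back via injectivity of $\rho$); the equality $\bigcap\MaxSpec H=0$ follows once part (ii) is in hand. For the converse, given $u$ and $\bigcap\MaxSpec H=0$, I would build $\rho$ explicitly by taking $X:=E_u$ with the weak$^*$ topology---compact Hausdorff as a subspace of $\Rbb^H$---and $(\rho h)(e):=e(h)$; injectivity of $\rho$ expresses exactly that $\bigcap\MaxSpec H=0$, and the separation axiom follows because distinct maximal ideals (neither containing the other) furnish an element lying in one kernel but not the other.

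For (ii) I would first observe that $\rho u>0$ pointwise: if $(\rho u)(x_0)=0$, the inequality $h\le ku$ would force $(\rho h)(x_0)=0$ for every $h\in H$, contradicting the separation axiom at any other point. Then $h\mapsto(\rho h)(x)/(\rho u)(x)$ is a hoop homomorphism $H\to\Rbb\p$ sending $u\mapsto 1$, with kernel $\mfrak_x$; maximality of $\mfrak_x$ is H\"older's theorem applied to the totally ordered image. The three maps are now easily mutually inverse once one handles the key compactness step for surjectivity of $x\mapsto\mfrak_x$: given $\mfrak\in\MaxSpec H$, the closed sets $\{(\rho h)=0\}$ for $h\in\mfrak$ have the finite intersection property, for otherwise a finite sum of elements of $\mfrak$ would be a unit of $H$ (by the argument of (i)) lying in $\mfrak$, absurd; any $x$ in the total intersection satisfies $\mfrak\subseteq\mfrak_x$, hence equality by maximality. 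Continuity of $x\mapsto\mfrak_x$ is built into the definition of the hull-kernel topology, so this continuous bijection from a compact space to a Hausdorff one is a homeomorphism.

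For (iii) I would take $R$ as the composition of the bijections from (ii) and set $f(x):=(\rho_1 u)(x)/(\rho_2 u)(Rx)$ for any fixed unit $u\in H$; both factors are strictly positive continuous, so $f\in C(X_1)\p$ is a unit, and the identity $\rho_1 h=f\cdot(\rho_2 h\circ R)$ merely restates that the canonical state at $x$ agrees whether computed via $\rho_1$ or via $\rho_2$ transported by $R$. Uniqueness of $R$ follows from a separation argument: if $S\ne R$ and $Sx\ne Rx$, choose $h$ with $(\rho_2 h)(Sx)=0\ne(\rho_2 h)(Rx)$ to contradict any analogous identity for $S$.

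For (iv) I would normalize by replacing $\rho$ with $h\mapsto\rho h/\rho u$ so that $\rho u=\one$, making $\rho H$ a sublattice of $C(X)$ that contains $\one$ and separates points. The lattice version of Stone--Weierstrass then gives uniform density of $\rho H-\rho H$ in $C(X)$, and a state $m$ on $(H,u)$ extends linearly to a bounded positive functional on this dense subspace (the bound $h\le ku$ yields $m(h)\le k$), hence uniquely to $C(X)$; Riesz representation then produces a unique regular Borel $\mu$ with $m(h)=\int_X\rho h\,d\mu$, and the inverse assignment is integration. When $H$ is countable, $\rho H$ is a countable uniformly dense subset of $C(X)$, forcing $X$ metrizable, and on compact metric spaces every finite Borel measure is regular. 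The main obstacle throughout is the compactness step in (ii) underlying the identification $X\cong\MaxSpec H$, which is essentially the Yosida--Kakutani representation theorem for the \llgroup\ enveloping $H$.
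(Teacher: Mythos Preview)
Your proposal is correct and, where the paper actually argues, tracks it closely: for (iv) both you and the paper invoke the lattice Stone--Weierstrass theorem to extend a state to all of $C(X)$ and then apply Riesz, and your metrizability argument for countable $H$ is equivalent to the paper's second-countability remark. The paper does not prove (i) and (ii) at all---it simply cites Yosida's representation theorem---so your explicit compactness/FIP arguments are supplying exactly what that citation covers; the one point you leave implicit is that $\MaxSpec H$ is Hausdorff in the hull-kernel topology (needed for your ``continuous bijection from compact to Hausdorff'' step), but this follows once you have the map $\MaxSpec H\to E_u$ in hand, since distinct maximal ideals $\mfrak_1,\mfrak_2$ yield, via any $h$ with $e_1(h)<p/q<e_2(h)$, the disjoint elements $qh\dotminus pu$ and $pu\dotminus qh$ separating them.

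For (iii) there is a genuine, if small, difference in route. The paper first argues pointwise: for each $x$, the subhoops $\{(\rho_1h)(x)\}$ and $\{(\rho_2h)(Rx)\}$ of $\Rbb\p$ are isomorphic (both being $H/\mfrak_x$), and then invokes the external fact~\cite[II.2.2]{kokorinkop} that every isomorphism between subhoops of $\Rbb\p$ is multiplication by a positive scalar, obtaining $f(x)$ abstractly before deriving the formula $f=(\rho_1u)/((\rho_2u)\circ R)$ to check continuity. You instead \emph{define} $f$ by that quotient formula and verify $\rho_1h=f\cdot(\rho_2h\circ R)$ directly from the fact that both sides yield the same extremal state after dividing by $\rho_1u$. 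Your route is self-contained (no appeal to the structure of $\Aut(\Rbb\p)$) and arguably cleaner; the paper's route makes explicit that $f$ is independent of the choice of unit $u$ used to define it, which in your argument is a consequence rather than a starting point.
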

\begin{proof}
(i) and (ii) are well known~\cite{yosida42}.

(iii) Let $x\in X_1$; then $\set{(\rho_1h)(x):h\in H}$ and $\set{(\rho_2h)(Rx):h\in H}$ are isomorphic nonzero subhoops of $\Rbb\p$ under the natural isomorphism (since they both are isomorphic to $H/\mfrak_x$). By~\cite[Proposition~II.2.2]{kokorinkop}, any isomorphism of subhoops of $\Rbb\p$ is given by multiplication by a positive real number. Therefore there exists $f(x)>0$ such that $(\rho_1h)(x)=f(x)\cdot\bigl((\rho_2h)(Rx)\bigr)$, for every $h\in H$. The function $x\mapsto f(x)$ is continuous because it is the quotient $f=(\rho_1u)/\bigl((\rho_2u)\circ R\bigr)$ of two continuous never $0$ functions ($u$ being any unit of $H$). Since $f$ is never $0$ and $X_1$ is compact, $f$ is a unit of $C(X_1)\p$; in general $f$ does not belong to $\rho_1 H$. Let $T:X_1\to X_2$ be a homeomorphism such that, for some unit $g$ of $C(X_1)\p$ and every $h\in H$, we have
$\rho_1h=g\cdot\bigl((\rho_2h)\circ T\bigr)$. Let $x\in X_1$; since $(\rho_1h)(x)=0$ iff $(\rho_2h)(Tx)=0$ for every $h\in H$, we have $Tx=Rx$.

(iv) Every state $m$ on $(H,u)$ obviously corresponds to the state $m\circ\rho^{-1}$ on $(\rho H,\rho u)$. As proved in~\cite[Proposition~1.1]{pantiinvariant} using the lattice version of the Stone-Weierstrass theorem, $m\circ\rho^{-1}$ can be uniquely extended to a state on all of $C(X)$. By the Riesz representation theorem there exists a 1--1 correspondence between states on $C(X)$ and Borel regular measures on $X$; denoting with $\mu$ the measure corresponding to $m\circ\rho\m$ we thus have
$$
m(h)=(m\circ\rho^{-1})(\rho h)=\int_X\rho h\ud \mu,
$$
for every $h\in H$. If $H$ is countable then $X$ is second countable, whence metrizable and every open set is $\sigma$-compact; hence every Borel measure on $X$ is regular~\cite[Theorem~2,18]{rudin87}. See~\cite{kroupa06}, \cite{pantiinvariant}, \cite{dvurecenskij11}, \cite{fedel_et_al}, \cite[Chapter~10]{mundici11} for details and further developments.
\end{proof}

A \newword{McNaughton function} over the $n$-dimensional cube $\oi^n$ is a continuous function $f:\oi^n\to\Rbb\p$ for which the following holds:
\begin{itemize}
\item[] there exist finitely many affine 
polynomials $\vect f1k$, each
$f_i$ of the form
$f_i=a^1_ix_1+a^2_ix_2+\cdots+a^n_ix_n+a^{n+1}_i$,
with $a^1_i,\ldots,a^{n+1}_i$ integers, such that, for each $w\in\oi^n$,
there exists $i\in\{1,\ldots,k\}$ with $f(w)=f_i(w)$.
\end{itemize}
The free cancellative hoop over $n+1$ generators $\free{n+1}{\CH}$ is then the subhoop $M(\oi^n)\p$ of $C(\oi^n)\p$ whose elements are all the McNaughton functions, the free generators being
$\vect x1n,\one -(x_1\lor\cdots\lor x_n)$; here $x_i$ is the \hbox{$i$-th} projection~\cite[Theorem~1]{panti07}. Also, the free MV-algebra over $n$ generators is $\free{n}{\MV}=\Gamma(\free{n+1}{\CH},\one)$, the free generators being $\vect x1n$. Extending the above notation, we will write $M(W)\p$ for the hoop of restrictions of McNaughton functions to the closed subset $W$ of $\oi^n$.

We write points of $\oi^n$ as $n$-tuples $w=(\vect \alpha1n)$, and points of $\Rbb^{n+1}$ as column vectors $\bb{w}=(\alpha_1\cdots\alpha_{n+1})^{tr}$.
We also use boldface to denote functions on subsets of $\Rbb^{n+1}$; in particular $\bb{x}_1,\ldots,\bb{x}_{n+1}$ are the 
coordinate projections. 
We often embed $\oi^n$ in the hyperplane $\set{\bb{x}_{n+1}=1}$ of $\Rbb^{n+1}$ in the obvious way.
The point $\bb{w}$ is \newword{rational} if it belongs to $\Qbb^{n+1}$, and is \newword{integer} if it belongs to
$\Zbb^{n+1}$. In the latter case, $\bb{w}$ is \newword{primitive} if the gcd of its coordinates is $1$.
The \newword{denominator} of $\bb{w}\in\Qbb^{n+1}$ is the least integer $b\ge1$ such that $b\bb{w}\in\Zbb^{n+1}$.

The \newword{cone} over $W\subseteq\oi^n\subset\Rbb^{n+1}$ is $\Cone(W)=\set{\alpha\bb{w}:\alpha\in\Rbb_{>0}\text{ and }\bb{w}\in W}$. Any $f\in M(W)\p$ gives rise to its \newword{homogeneous correspondent} $\bb{f}:\Cone(W)\to\Rbb\p$ via $\bb{f}(\bb{w})=
\bb{x}_{n+1}(\bb{w})\cdot
f\bigl(\bb{w}/\bb{x}_{n+1}(\bb{w})\bigr)$, which is a positively homogeneous piecewise-linear map, all of whose linear pieces have integer coefficients. In syntactical terms, $\bb{f}$ is obtained by writing $f$ as a term (either in the \llgroup\ language $+,-,0,\lor,\land$ or in the hoop language) built up from the projections $\vect x1n$ and the constant function $\one$, and replacing each $x_i$ with $\bb{x}_i$, and $\one$ with $\bb{x}_{n+1}$. 

\begin{definition}
A\label{ref18} \newword{McNaughton representation} of the cancellative hoop $H$ is a representation $\rho:H\to C(W)\p$ such that $W$ is a closed subset of some cube $\oi^n$ and the range of $\rho$ is $M(W)\p$. We then write $\bb{h}:\Cone(W)\to\Rbb\p$ for the homogeneous correspondent of $\rho h$. Given a unit $u$ of $H$, we set $W_1=\Cone(W)\cap\set{\bb{u}=1}$. The map $h\mapsto \bb{h}\restriction W_1$ is a representation $\rho_1:H\to C(W_1)\p$, and the projection $\bb{w}\mapsto \bb{w}/\bb{u}(\bb{w})$ from $W$ to $W_1$ is the homeomorphism~$R$ of Lemma~\ref{ref4}(iii).
\end{definition}

\begin{lemma}
Let\label{ref6} $H,u,\rho_1$ be as in Definition~\ref{ref18}. Then the homeomorphism $F:E_u\to W_1$ of Lemma~\ref{ref4}(ii) amounts to $F(e)=\bb{w}$ iff $e(h)=\bb{h}(\bb{w})$ for every $h\in H$.
Moreover, given any integer $b\ge1$, $F$ induces a bijection between:
\begin{itemize}
\item[(a)] the discrete states $e\in E_u$ of denominator $b$;
\item[(b)] the rational points $\bb{w}\in W_1$ of denominator $b$;
\item[(c)] the primitive points in $\Cone(W)\cap\set{\bb{u}=b}$.
\end{itemize}
In particular, each of the above three sets is finite.
\end{lemma}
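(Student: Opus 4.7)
The first assertion is a direct unwinding of Lemma~\ref{ref4}(ii) applied to the McNaughton representation $\rho_1:H\to C(W_1)\p$: since $W_1\subseteq\set{\bb{u}=1}$, the normalizing factor $(\rho_1 u)(\bb{w})=\bb{u}(\bb{w})$ is identically $1$, and the formula for $F$ collapses to $e(h)=(\rho_1 h)(\bb{w})=\bb{h}(\bb{w})$. The bijection (b)$\leftrightarrow$(c) is then effected by multiplication by $b$: positive homogeneity of $\bb{u}$ sends a rational $\bb{w}\in W_1$ of denominator $b$ to $b\bb{w}\in\Cone(W)\cap\set{\bb{u}=b}$, which is primitive integer precisely because $b$ is the least positive integer clearing denominators; conversely, a primitive integer $\bb{v}\in\Cone(W)\cap\set{\bb{u}=b}$ produces $\bb{v}/b\in W_1$ whose denominator must be exactly $b$, since any proper divisor would contradict primitivity of $\bb{v}$.

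The substance of the lemma is the bijection (a)$\leftrightarrow$(b). Fix $\bb{w}\in W_1$ and let $G_{\bb{w}}=\Zbb\bb{x}_1(\bb{w})+\cdots+\Zbb\bb{x}_{n+1}(\bb{w})$, a subgroup of $\Rbb$. For every $h\in H$, the homogeneous correspondent $\bb{h}$ coincides on some full-dimensional piece whose closure contains $\bb{w}$ with a linear form $\sum a_i\bb{x}_i$ having integer coefficients, so $e(h)=\bb{h}(\bb{w})\in G_{\bb{w}}\cap\Rbb\p$. In the other direction, given integers $m_i$, the function $\bb{g}^+=\bigl(\sum m_i\bb{x}_i\bigr)\lor0$ is a positively homogeneous, nonnegative, integer-PL map on $\Cone(\oi^n)$, so it is the homogeneous correspondent of a McNaughton function on $\oi^n$; its restriction to $W$ lies in $M(W)\p=\rho H$, and hence equals $\rho h$ for some $h\in H$. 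Evaluating at $\bb{w}$ produces every nonnegative element of $G_{\bb{w}}$, so $e[H]=G_{\bb{w}}\cap\Rbb\p$. Now $\bb{u}(\bb{w})=1$ and $\bb{u}$ is locally an integer combination of the $\bb{x}_i$, hence $1\in G_{\bb{w}}$; a subgroup of $\Rbb$ containing $1$ is either dense or of the form $(1/b)\Zbb$ for a unique $b\ge1$. The first alternative makes $e[H]$ dense, contradicting discreteness; the second says $e$ is discrete of denominator $b$. Finally, $G_{\bb{w}}=(1/b)\Zbb$ is equivalent, via Bezout, to $b\bb{w}$ being a primitive integer point, i.e., to $\bb{w}$ having denominator $b$.

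Finiteness is then immediate from compactness: $\bb{u}$ is continuous and nowhere zero on the compact set $W$, so has a positive minimum there, and $W_1=\set{\bb{w}/\bb{u}(\bb{w}):\bb{w}\in W}$ is compact as the continuous image of $W$; any bounded subset of $\Rbb^{n+1}$ contains only finitely many rationals of a prescribed denominator. The step I expect to require the most care is the surjective direction $G_{\bb{w}}\cap\Rbb\p\subseteq e[H]$, which relies essentially on $\rho$ being a \emph{McNaughton} representation (so that its range is all of $M(W)\p$, not merely a subhoop) together with the closure of $M(\oi^n)\p$ under $\lor0$ that lifts the integer linear form $\sum m_i\bb{x}_i$ to a bona fide element of $H$.
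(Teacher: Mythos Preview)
Your proof is correct and follows essentially the same approach as the paper's. The only stylistic difference is that the paper observes directly that $H$ is generated by the $\rho$-counterimages $h_1,\ldots,h_{n+1}$ of the coordinate projections $x_1\restriction W,\ldots,x_n\restriction W,\one\restriction W$, so $e[H]$ is the subhoop of $\Rbb\p$ generated by the coordinates $\alpha_i=\bb{x}_i(\bb{w})$, which equals $G_{\bb{w}}\cap\Rbb\p$ in one stroke; you instead verify the two inclusions $e[H]\subseteq G_{\bb{w}}\cap\Rbb\p$ and $e[H]\supseteq G_{\bb{w}}\cap\Rbb\p$ by hand via the integer-PL structure and the auxiliary function $(\sum m_i\bb{x}_i)\lor0$. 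Both arguments compute the same group, and your explicit treatment of finiteness via compactness of $W_1$ is a detail the paper leaves implicit.
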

\begin{proof}
Since $\bb{u}\restriction W_1=\one$, the statement about $F$ is obvious. Let $h_1,\ldots,h_n,h_{n+1}$ be the $\rho$-counterimages of the generators $x_1\restriction W$, $\ldots$, $x_n\restriction W$, 
$\one\restriction W$ of $M(W)\p$. Then $H$ is generated by $\vect h1{{n+1}}$, and $\bb{h}_i=\bb{x}_i$, for $i=1,\ldots,n+1$. Let now $F(e)=\bb{w}=(\alpha_1\cdots\alpha_{n+1})^{tr}\in W_1$.
Then $e(h_i)=\alpha_i$, and $e[H]$ is the intersection of $\Rbb\p$
with the subgroup of $\Rbb$ generated by $\vect\alpha1{{n+1}}$. Therefore $e$ is discrete of denominator $b$ iff the subgroup of $\Rbb$ generated by $\vect\alpha1{{n+1}}$ is $b^{-1}\cdot\Zbb$ iff $\vect{b\alpha}1{{n+1}}\in \Zbb$ and the group they generate is all of $\Zbb$ iff $\vect{b\alpha}1{{n+1}}$ are relatively prime integers iff $\bb{w}$ is a rational point of denominator $b$. This 
establishes the bijection between (a) and (b).
To every rational point $\bb{w}\in W_1$ there corresponds the unique primitive point $\den(\bb{w})\bb{w}\in\Cone (W)$, and $\bb{u}\bigl(\den(\bb{w})\bb{w}\bigr)=\den(\bb{w})\cdot\bb{u}(\bb{w})=\den(\bb{w})$, so (b) and (c) are in bijection.
\end{proof}

We need some further tools from piecewise-linear topology: see~\cite{rourkesan}, \cite{ziegler95}, \cite{ewald96} for full details.
A \newword{rational polytope} $S$ is the convex hull 
of finitely many points of $\Qbb^{n+1}$; its \newword{(affine) dimension}
is the maximum integer $d=\dim(S)$ such that $S$ contains $d+1$ affinely independent points.
The affine subspace $A=\aff(S)$ of $\Rbb^{n+1}$ spanned by $S$ is then $d$-dimensional and defined over $\Qbb$ (i.e., it is the $0$-set of finitely many affine polynomials with rational coefficients). 
A \newword{face} of $S$ is $S$ itself or the intersection of $S$ with an hyperplane $\pi$ such that $S$ is entirely contained in one of the two closed halfspaces determined by $\pi$; the empty set is a face, and every face different from $S$ is \newword{proper}.
The \newword{index} of $S$ is the least integer $b\ge1$ such that $bA\cap\Zbb^{n+1}\not=\emptyset$~\cite[p.~518]{becksamwoods08},
and the \newword{relative interior} of $S$, $\relint(S)$, is the topological interior of $S$ in $A$ or, equivalently, the set of all points of $S$ which do not lie in a proper face.

\begin{definition}
Let\label{ref11} $S,A,b$ be as above.
If $d=0$, then $S=A=\set{\bb{w}}$ for some $\bb{w}\in\Qbb^{n+1}$, and we define $\nu_A(S)=1$.
Assume $d>0$. Then there exists a (nonunique) affine isomorphism $\psi:bA\to\Rbb^d$ that maps $bA\cap\Zbb^{n+1}$ bijectively to~$\Zbb^d$. Let us abuse language by writing $b:A\to bA$ for the map $\bb{w}\mapsto b\bb{w}$. We define the \newword{relative volume form} $\Omega_A$ on $A$ as
\begin{align*}
\Omega_A=\frac{1}{b^{d+1}}\cdot
\bigl(&\text{the pull-back via $\psi\circ b$ of the standard}\\
&\text{volume form $\mathrm{d}\bar{x}=\mathrm{d}x_1\land\cdots\land \mathrm{d}x_d$ on $\Rbb^d$}\bigr).
\end{align*}
This amounts to saying that, for every continuous function
$f:A\to\Rbb$ with compact support (more generally, every Riemann-integrable function), we have
\begin{equation}\label{eq14}
\int_A f\,\Omega_A=\frac{1}{b^{d+1}}
\int_{\Rbb^d}f\circ b\m\circ\psi\m\ud \bar{x}.
\end{equation}
Up to sign $\Omega_A$ does not depend on the choice of $\psi$, and we always assume that $\psi$ has been chosen so that $\int_A f\,\Omega_A\ge0$ for $f\ge0$.
The measure $\nu_A$ induced by~$\Omega_A$ on~$A$ is then the $d$-dimensional Lebesgue measure, appropriately normalized.
\end{definition}

Given a Riemann-measurable subset $E$ of $A$, the identity~(\ref{eq14}) yields the explicit formula
\begin{equation}\label{eq20}
\begin{split}
\nu_A(E)&=\int_A\one_E\,\Omega_A\\
&=\frac{1}{b^{d+1}}
\int_{\Rbb^d}\one_E\circ b\m\circ\psi\m\ud \bar{x} \\
&=
\frac{1}{b^{d+1}}
\int_{\Rbb^d}\one_{\psi[bE]}\ud \bar{x} \\
&=
\frac{1}{b^{d+1}} \lambda^d\bigl(\psi[bE]\bigr),
\end{split}
\end{equation}
where $\lambda^d$ is the usual $d$-dimensional Lebesgue measure on $\Rbb^d$.

\begin{example}
For\label{ref28} the reader's convenience we give here a direct construction for the map $\psi$ of Definition~\ref{ref11}, and provide an example. Adopting the above notation, assume that $\bb{w}\in bA\cap\Zbb^{n+1}$. Then $bA-\bb{w}$ is a $d$-dimensional linear subspace of $\Rbb^{n+1}$, and is defined over $\Qbb$ (equivalently, over $\Zbb$). It follows that there exists a 
$\Zbb$-basis $\bb{m}_1,\ldots,\bb{m}_d,\bb{m}_{d+1},\ldots,\bb{m}_{n+1}$
of $\Zbb^{n+1}$ whose first $d$ elements constitute a $\Zbb$-basis for $(bA-\bb{w})\cap\Zbb^{n+1}$. 
Let $\bb{e}_1,\ldots,\bb{e}_d$ denote the standard basis of $\Rbb^d$ and let $\varphi:\Rbb^{n+1}\to\Rbb^d$ denote the unique linear map sending $\bb{m}_i$ to $\bb{e}_i$ if $i\le d$, and to~$\bb{0}$ otherwise.
Then the map $\psi(\bb{v})=\varphi(\bb{v}-\bb{w})$ is an affine isomorphism as described in Definition~\ref{ref11}.
Any other isomorphism $\psi':bA\to\Rbb^d$ sharing the same properties must be of the form $\psi'=t\circ g\circ\psi$, where $g$ is a linear automorphism of $\Rbb^d$ induced by a matrix $M$ in the group $\GL_d\Zbb$ of all invertible $d\times d$ matrices with integer entries (this corresponds to choosing a basis for $(bA-\bb{w})\cap\Zbb^{n+1}$ different from $\bb{m}_1,\ldots,\bb{m}_d$), and $t$ is the translation by some vector in $\Zbb^d\subset\Rbb^d$ (this corresponds to choosing an element of $bA\cap\Zbb^{n+1}$ different from $\bb{w}$). By multilinear algebra, changing $\psi$ with $\psi'$ merely replaces $\Omega_A$ with $\det(M)\Omega_A$ and, since $\det(M)=\pm 1$, up to sign $\Omega_A$ does not depend on~$\psi$.

As an example, consider the following five simplexes in $\Rbb^3$:
\begin{itemize}
\item $S_1=$ the convex hull of $(0,1/3,1)$, $(1/3,1,1)$, $(1/9,8/9,1)$;
\item $S_2=$ the convex hull of $(1/2,1/4,1)$, $(1,1/2,1)$;
\item $S_3=$ the convex hull of $(1/3,3/5,1)$, $(1/3,1,1)$;
\item $S_4=\set{(1/2,1/2,1)}$;
\item $S_5=\set{(2/7,1/7,1)}$.
\end{itemize}
All of $S_1,\ldots,S_5$ lie in the unit square of the hyperplane $\set{\bb{x}_3=1}$; we provide a sketch for the reader's convenience
\begin{figure}[!h]
\includegraphics[height=6cm,width=6cm]{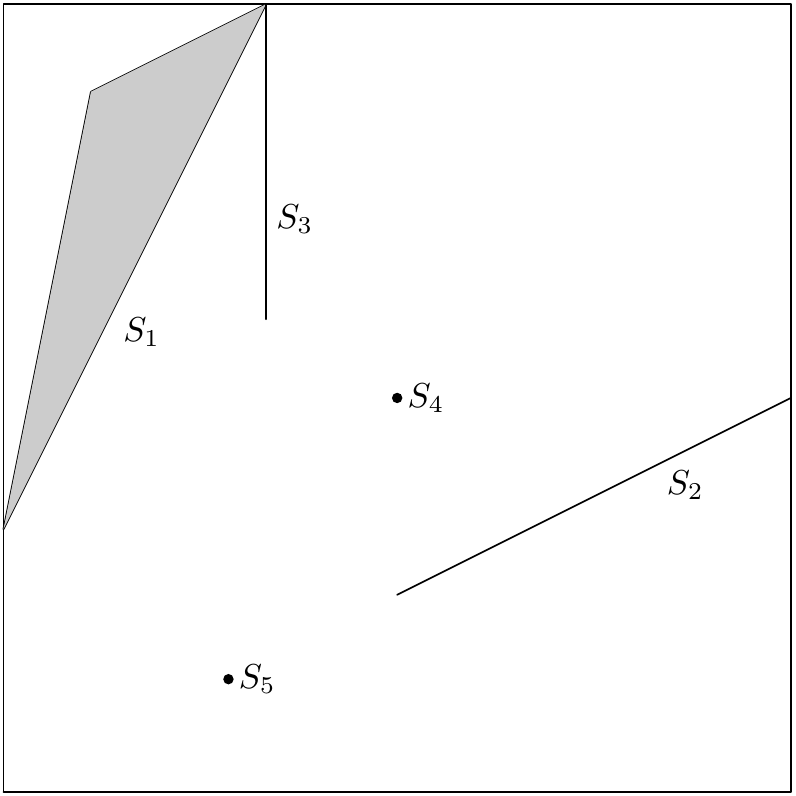}
\end{figure}

\noindent Let $\lambda^2,\lambda^1,\lambda^0$ be the usual $2$-dimensional, \hbox{$1$-dimensional}, $0$-dimensional Lebesgue measures on $\set{\bb{x}_3=1}$; we have
$\lambda^2(S_1)=1/18$, $\lambda^1(S_2)=\sqrt{5}/4$, $\lambda^1(S_3)=2/5$, $\lambda^0(S_4)=\lambda^0(S_5)=1$.

Now, the affine span of $S_1$ is all of $\set{\bb{x}_3=1}$, which intersects $\Zbb^3$ nontrivially; hence the index of $S_1$ is $1$ and $\nu_{\aff(S_1)}=\lambda^2$. The affine span of $S_2$ again contains points in $\Zbb^3$, e.g., $\bb{w}=(0,0,1)$. An appropriate affine isomorphism $\psi:\aff(S_2)\to\Rbb$ 
(i.e., one that establishes a bijection between $\aff(S_2)\cap\Zbb^3$ and $\Zbb\subset\Rbb$)
is determined by mapping $\bb{w}$ to $0$ and $\bb{v}=(2,1,1)$ to $1$. The line segment $[\bb{w},\bb{v}]$ has thus $\nu_{\aff(S_2)}$-measure $1$ and $\lambda^1$-measure $\sqrt{5}$, so that $\nu_{\aff(S_2)}=(\sqrt{5})^{-1}\lambda^1$.

The affine span of $S_3$ does not contain points in $\Zbb^3$, and neither does $2\aff(S_3)$. On the other hand, $3\aff(S_3)$ is the line passing through $\bb{w}=(1,0,3)$ and $\bb{v}=(1,1,3)$, so the index of $S_3$ is $3$; we can take $\psi:3\aff(S_3)\to\Rbb$ as the unique affine map that sends $\bb{w}$ to $0$ and $\bb{v}$ to $1$.
By~(\ref{eq20}) we have
$$
\nu_{\aff(S_3)}(S_3)=\frac{\text{length of $\psi[3S_3]$}}{9}.
$$
One easily checks that $\psi[3S_3]$ is the interval $[9/5,3]$ in $\Rbb$ and concludes that $\nu_{\aff(S_3)}(S_3)=2/15$; since $\lambda^1(S_3)=2/5$, we have $\nu_{\aff(S_3)}=3^{-1}\lambda^1$.

Finally, $\nu_{\aff(S_4)}=\nu_{\aff(S_5)}=\lambda^0$ by definition.
\end{example}

If $d>0$ and $b=1$ then $\nu_{\aff(S)}(S)$ is the relative volume of $S$~\cite[\S5.4]{beckrobins07}. More generally we have the following lemma.

\begin{lemma}
Let\label{ref25} $d>0$ and let $S$ be a $d$-dimensional simplex, not necessarily rational, whose vertices $\vect{\bb{w}}1{{d+1}}$ lie on $\set{\bb{a}=1}$, for some $\bb{a}\in\Hom(\Zbb^{n+1},\Zbb)$.
Assume that the subspace $V$ spanned by $\vect{\bb{w}}1{{d+1}}$ in $\Rbb^{n+1}$ is defined over $\Qbb$, and let 
$\vect{\bb{m}}1{{d+1}}$ be a $\Zbb$-basis for the free $\Zbb$-module $\Zbb^{n+1}\cap V$. Let $M\in\GL_{d+1}\Rbb$ be defined by $(\bb{w}_1\cdots\bb{w}_{d+1})=
(\bb{m}_1\cdots\bb{m}_{d+1})M$. Then for every affine function $f:S\to\Rbb$ we have
$$
\int_Sf\ud\nu_{\aff(S)}=
\frac{\abs{\det(M)}\bigl(f(\bb{w}_1)+\cdots+f(\bb{w}_{d+1})\bigr)}{(d+1)!}.
$$
\end{lemma}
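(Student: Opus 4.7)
The plan is to decompose the lemma into an integration part (handled by the centroid formula for affine functions on a simplex) and a geometric part, namely $\nu_{\aff(S)}(S)=\abs{\det(M)}/d!$. For the integration part, by symmetry of the barycentric coordinates, for any affine $f$,
$$
\int_S f\,\ud\nu_{\aff(S)} = \nu_{\aff(S)}(S)\cdot f\!\left(\frac{1}{d+1}\sum_{j=1}^{d+1}\bb{w}_j\right) = \nu_{\aff(S)}(S)\cdot\frac{f(\bb{w}_1)+\cdots+f(\bb{w}_{d+1})}{d+1},
$$
where the last step uses affineness of $f$. This reduces the lemma to the identity $\nu_{\aff(S)}(S)=\abs{\det(M)}/d!$.

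To prove this identity, I first observe that $\abs{\det(M)}$ depends only on the lattice $V\cap\Zbb^{n+1}$ and not on the chosen $\Zbb$-basis, since any basis change multiplies $M$ by a matrix in $\GL_{d+1}\Zbb$. This lets me pick a basis adapted to $\bb{a}$. Let $b$ be the index of $\aff(S)$; since $V$ is defined over $\Qbb$ and $\bb{a}$ is nonzero on $V$ (by $\bb{a}(\bb{w}_j)=1$), the image $\bb{a}(V\cap\Zbb^{n+1})$ is a nonzero subgroup of $\Zbb$, necessarily $b\Zbb$. Pick $\bb{u}_0\in V\cap\Zbb^{n+1}$ with $\bb{a}(\bb{u}_0)=b$; a routine argument (subtracting a multiple of $\bb{u}_0$) shows $V\cap\Zbb^{n+1}=\Lambda\oplus\Zbb\bb{u}_0$, where $\Lambda=V\cap\ker(\bb{a})\cap\Zbb^{n+1}$ is a rank-$d$ lattice. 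Take a $\Zbb$-basis $\bb{n}_1,\ldots,\bb{n}_d$ of $\Lambda$ and set $\bb{m}_i=\bb{n}_i$ for $i\le d$, $\bb{m}_{d+1}=\bb{u}_0$.

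With this basis, $\bb{a}(\bb{w}_j)=1$ forces $\bb{w}_j=\sum_{i=1}^d c_{ij}\bb{n}_i+(1/b)\bb{u}_0$, so the last row of $M$ is $(1/b,\ldots,1/b)$. Subtracting the first column from the others and expanding along the last row gives $\abs{\det(M)}=(1/b)\abs{\det(\tilde C)}$, where $\tilde C$ is the $d\times d$ matrix with entries $c_{i,j+1}-c_{i,1}$. For $\nu_{\aff(S)}(S)$, I would take $\psi\colon b\,\aff(S)\to\Rbb^d$ to be the map sending $\bb{v}$ to the coordinates of $\bb{v}-\bb{u}_0$ in $\bb{n}_1,\ldots,\bb{n}_d$, which qualifies as a $\psi$ in Definition~\ref{ref11}. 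Then $\psi[bS]$ is the simplex in $\Rbb^d$ with vertices $(bc_{1j},\ldots,bc_{dj})$, whose $d$-dimensional Lebesgue volume is $(b^d/d!)\abs{\det(\tilde C)}$. Substituting into~(\ref{eq20}) yields $\nu_{\aff(S)}(S)=\abs{\det(\tilde C)}/(b\cdot d!)=\abs{\det(M)}/d!$, as required.

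The main obstacle is the bookkeeping of the index $b$, which appears twice: once as the scaling $b^{-(d+1)}$ in the definition of $\nu_A$, and once as the $1/b$ entry in the last row of $M$. The two contributions must cancel exactly, which they do only because the basis has been chosen to align with the sequence $0\to\Lambda\to V\cap\Zbb^{n+1}\to\Zbb\to 0$ induced by $\bb{a}$; this reflects that $\nu_A$ is intrinsic to the lattice $V\cap\Zbb^{n+1}$ and does not ``see'' $b$ once the position of $\aff(S)$ inside $V$ has been properly encoded by $M$.
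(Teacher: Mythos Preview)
Your proof is correct and follows essentially the same route as the paper: both reduce to the volume identity $\nu_{\aff(S)}(S)=\abs{\det(M)}/d!$, then choose a $\Zbb$-basis of $V\cap\Zbb^{n+1}$ adapted to the splitting induced by $\bb{a}$ (your $\bb{n}_1,\ldots,\bb{n}_d,\bb{u}_0$ are the paper's $\bb{r}_1,\ldots,\bb{r}_d,\bb{r}_{d+1}$), and compute using the same $\psi$. The only cosmetic difference is that the paper scales up to $bS$ first and verifies the determinant identity via an auxiliary matrix $U$, whereas you work directly with $M$ and the column-difference matrix $\tilde C$; the bookkeeping of the factor $b$ is identical in both.
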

\begin{proof}
It is obvious that $\int_S f\ud\lambda$, where $\lambda$ is any multiple of the $d$-dimensional Lebesgue measure, is the product of $\lambda(S)$ and the average value of $f$ over the vertices of~$S$. Since $\nu_{\aff(S)}$ is such a multiple, we just need to check the stated formula for~$f=\one$, namely
\begin{equation}\label{eq16}
\nu_{\aff(S)}(S)=\frac{\abs{\det(M)}}{d!}.
\end{equation}
Writing $b$ for the least positive integer such that $b\aff(S)$ contains an integer point, we have by
definition $\nu_{\aff(S)}(S)=b^{-(d+1)}\nu_{\aff(bS)}(bS)$, so everything boils down to proving
\begin{equation}\label{eq13}
\nu_{\aff(bS)}(bS)=\frac{\abs{\det(bM)}}{d!}.
\end{equation}
Now, the strip $V\cap\set{0<\bb{a}<1}$ does not contain integer points and hence neither does the strip $V\cap\set{0<\bb{a}<b}$. It follows that $\Zbb^{n+1}\cap V$ has a $\Zbb$-basis $(\vect{\bb{r}}1{{d+1}})$ with 
$\vect{\bb{r}}1d\in V\cap\set{\bb{a}=0}$ and $\bb{r}_{d+1}\in
V\cap\set{\bb{a}=b}=\aff(bS)$. Let $(b\bb{w}_1\cdots b\bb{w}_{d+1})=
(\bb{r}_1\cdots \bb{r}_{d+1})R$; then $R\in\GL_{d+1}\Rbb$ has last row $(1\cdots 1)$.

As in Example~\ref{ref28} we define $\psi:\aff(bS)\to\Rbb^d$ by sending $\bb{r}_{d+1}$ to $\bb{0}$ and $\bb{r}_i+\bb{r}_{d+1}$ to the $i$-th element $\bb{e}_i$ of the standard basis of $\Rbb^d$.
Thus $\nu_{\aff(bS)}(bS)$ is the ordinary volume of $\psi(bS)$ in
$\Rbb^d$, namely $\abs{\det(P)}/d!$, where $P$ is the $(d+1)\times(d+1)$ matrix (see, e.g., \cite{stein66}) whose last row is $(1\cdots 1)$ and whose upper $d\times(d+1)$ minor $P'$ is defined by
$$
(\psi b\bb{w}_1\,\cdots\, \psi b\bb{w}_{d+1})=
(\bb{e}_1\,\cdots\, \bb{e}_d)P'.
$$
We claim that $P=R$. Indeed, let $U$ be the $(d+1)\times(d+1)$ matrix that has $1$ along the main diagonal and the last row, and $0$ otherwise.
Then
$$
(b\bb{w}_1\,\cdots\, b\bb{w}_d\,\,b\bb{w}_{d+1})=
(\bb{r}_1+\bb{r}_{d+1}\,\cdots\, \bb{r}_d+\bb{r}_{d+1}\,\,\bb{r}_{d+1})
U\m R,
$$
and each column of $U\m R$ adds up to $1$ (because $(1\cdots 1\,1)U\m=(0\cdots 0\,1)$, and $R$ has last row $(1\cdots 1\,1)$).
Hence we get
$$
(\psi b\bb{w}_1\,\cdots\, \psi b\bb{w}_d \,\,\psi b\bb{w}_{d+1})=
(\bb{e}_1\,\cdots\, \bb{e}_d\,\,\bb{0})U\m R=
(\bb{e}_1\,\cdots\, \bb{e}_d\,\,\bb{0})R.
$$
Therefore the $d\times(d+1)$ upper minor of $R$ is $P'$, and since the last row of $R$ is $(1\cdots 1)$ we have $P=R$, as claimed.

We have thus proved~\eqref{eq13} for a specific choice ---namely
$(\bb{m}_1\cdots\bb{m}_{d+1})=(\bb{r}_1\cdots\bb{r}_{d+1})$, whence
$bM=R$--- of the basis of
$\Zbb^{n+1}\cap V$. But any other choice is the image of
$(\bb{r}_1\cdots\bb{r}_{d+1})$ by a matrix in $\GL_{d+1}\Zbb$, and hence (\ref{eq13}) remains valid.
\end{proof}

\begin{example}
Let $S_1,S_2,S_3$ be as in Example~\ref{ref28}; we can take $\bb{a}=\bb{x}_3$. The subspace $V_1$ spanned by the vertices of $S_1$ is all of $\Rbb^3$, so we can take the standard basis of $\Rbb^3$ as $\bb{m}_1,\bb{m}_2,\bb{m}_3$, whence
$$
M_1=
\begin{pmatrix}
0 & 1/3 & 1/9 \\
1/3 & 1 & 8/9 \\
1 & 1 & 1
\end{pmatrix}.
$$
By~(\ref{eq16}) we have $\nu_{\aff(S_1)}(S_1)=\abs{\det(M_1)}/2!=1/18$, in agreement with the direct checking of Example~\ref{ref28}.

The subspace $V_2$ spanned by the vertices of $S_2$ is $\set{\bb{x}_1-2\bb{x}_2=0}$, that intersects $\Zbb^3$ is the free $\Zbb$-module $\Zbb(0,0,1)+\Zbb(2,1,0)$. The matrix $M_2$ is then determined by
$$
\begin{pmatrix}
1/2 & 1 \\
1/4 & 1/2 \\
1 & 1
\end{pmatrix}=
\begin{pmatrix}
0 & 2 \\
0 & 1 \\
1 & 0
\end{pmatrix} M_2,
$$
hence
$$
M_2=\begin{pmatrix}
1 & 1 \\
1/4 & 1/2
\end{pmatrix},
$$
and $\nu_{\aff(S_2)}(S_2)=\abs{\det(M_2)}/1!=1/4$. The  computation for $S_3$ is analogous.
\end{example}

\begin{definition}
A\label{ref14} \newword{$d$-dimensional rational polytopal complex}
is a finite set $\Sigma$ of rational polytopes in $\Rbb^{n+1}$
such that each face of each element of $\Sigma$ belongs to $\Sigma$, every two elements intersect in a ---possibly empty--- common face, at least one element is $d$-dimensional and none is $l$-dimensional, for $d<l\le n+1$.
If all the elements of $\Sigma$ are simplexes, we say that $\Sigma$ is a \newword{simplicial} complex.
For $0\le l\le d$, let $\Sigma\mmax(l)$ be the set of all $l$-dimensional polytopes of $\Sigma$ which are not properly contained in any element of~$\Sigma$.
A \newword{rational polytopal set} $W$ is the underlying set $W=\bigcup\Sigma$ of some rational polytopal 
complex~$\Sigma$.
\end{definition}

\begin{lemma}
Let\label{ref20} $W=\bigcup\Sigma=\bigcup\Pi$ be a polytopal set.
Then:
\begin{itemize}
\item[(i)] If $S\in\Sigma$ is $l$-dimensional, then $\Pi_S=\set{F\cap P:F\text{ is a face of }S\text{ and }P\in\Pi}$
is a pure polytopal complex (i.e., $(\Pi_S)\mmax(r)=\emptyset$ for every $r<l$).
\item[(ii)] If $S\in\Sigma\mmax(l)$, then there exists $P\in\Pi\mmax(l)$ such that $S\cap P$ is $l$-dimensional.
\item[(iii)] $\bigcup\Sigma\mmax(l)=\bigcup\Pi\mmax(l)$, for every $l$.
\item[(iv)] $\Sigma$ and $\Pi$ have the same dimension.
\end{itemize}
\end{lemma}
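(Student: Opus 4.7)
The plan is to prove (i) by exhibiting $\Pi_S$ as the polytopal complex of intersections of faces of $S$ with cells of $\Pi$ and using a local ``dimension jump'' argument for purity; then (ii) follows from (i) via a local-dimension computation at a point of $\relint(S)$; finally (iii) is obtained by taking closures and (iv) is a corollary.

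For (i), the faces of $S$ themselves form a polytopal complex $\Sigma_S$ with underlying set $S$, and $\Pi_S = \{F \cap P : F \in \Sigma_S,\ P \in \Pi\}$ is the refinement of $\Sigma_S$ by $\Pi$: each $F \cap P$ is a polytope, and the fact that every face of a polytope $A$ is cut out by a supporting hyperplane of $A$ shows that the intersection of any two cells of $\Pi_S$ is a common face of both. Hence $\Pi_S$ is a polytopal complex with underlying set~$S$. For purity, suppose $C \in \Pi_S$ has dimension $r < l$ and pick $x \in \relint(C)$; since $\dim S = l > r$, points of $S \setminus C$ accumulate at $x$, so some sequence $x_k \to x$ lies in $S \setminus C$. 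By finiteness of $\Pi_S$, a subsequence is contained in a single cell $C'$, whose closedness forces $x \in C'$. The intersection $C \cap C'$ is a face of $C$ meeting $\relint(C)$, hence equals $C$, so $C \subsetneq C'$ and $\dim C' > \dim C$; iterating raises the dimension up to $l$, proving purity.

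For (ii), fix $S \in \Sigma\mmax(l)$ and $x \in \relint(S)$. Maximality of $S$ forces any $S' \in \Sigma$ meeting $\relint(S)$ to satisfy $S \cap S' = S$ (the intersection is a face of $S$ hitting its relative interior), hence $S \subseteq S'$ and therefore $S' = S$. Consequently $W \cap B(x,\varepsilon) = S \cap B(x,\varepsilon)$ for small $\varepsilon > 0$, a set of local dimension exactly $l$. On the $\Pi$ side, every cell $Q$ through $x$ must satisfy $\dim Q \le l$, else $Q \cap B(x,\varepsilon)$ would contribute to $W \cap B(x,\varepsilon)$ a piece of dimension larger than $l$. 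Since the local dimension of $W$ at $x$ equals $\max\{\dim Q : Q \in \Pi,\ x \in Q\}$, there must exist $P \in \Pi$ with $x \in P$ and $\dim P = l$; any $P' \in \Pi$ strictly containing $P$ would again violate the local dimension bound, so $P \in \Pi\mmax(l)$. Finally $B(x,\varepsilon) \cap P \subseteq S$ gives $\dim(S \cap P) \ge l$, while $S \cap P \subseteq S$ gives the reverse.

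For (iii), part (ii) shows $\relint(S) \subseteq \bigcup \Pi\mmax(l)$ for every $S \in \Sigma\mmax(l)$; taking the finite union and then the closure (noting that $\bigcup \Pi\mmax(l)$ is already closed) yields $\bigcup \Sigma\mmax(l) \subseteq \bigcup \Pi\mmax(l)$, and symmetry gives equality. Part (iv) is immediate from (iii): $\dim \Sigma$ is the largest $l$ with $\Sigma\mmax(l) \ne \emptyset$, and this coincides with the analogous invariant of $\Pi$. The main obstacle I anticipate is the local-dimension step inside (ii): converting the combinatorial maximality of $S$ in $\Sigma$ into the topological statement that $W$ is locally $l$-dimensional at $x$, and then using this to rule out $\dim P > l$ for any $\Pi$-cell through $x$, is where the polytopal-complex hypothesis on both $\Sigma$ and $\Pi$ is truly indispensable.
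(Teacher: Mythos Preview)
Your argument is correct, and for part (i) it is essentially the paper's argument recast with sequences instead of an open ball. For parts (ii) and (iii), however, you take a genuinely different route.

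The paper proves (ii) purely combinatorially: from (i) it picks some $R\in(\Pi_S)\mmax(l)$, writes $R=S\cap P$ for $P\in\Pi\mmax(p)$ with $p\ge l$, and then applies (i) a second time with the roles of $\Sigma$ and $\Pi$ reversed to the complex $\Sigma_P$; purity of $\Sigma_P$ forces $R$ to sit inside a $p$-dimensional cell $S'\cap P$, whence $l\ge\dim(S\cap S')\ge p$ and so $p=l$. Your approach instead localizes at a single point $x\in\relint(S)$, shows that $W$ agrees with $S$ near $x$, and argues via the (affine) dimension of neighbourhoods that every $\Pi$-cell through $x$ has dimension $\le l$ while at least one has dimension exactly $l$. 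This is a clean topological substitute for the paper's double application of (i); it avoids building $\Sigma_P$ at the cost of invoking the easy fact that an $l$-ball cannot be a finite union of lower-dimensional polytopes. One caveat: your phrase ``local dimension'' suggests the inductive topological notion introduced only later in the paper; what you actually use is just the affine dimension of $B(x,\varepsilon)\cap W$, so it would be cleaner to phrase it that way.

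For (iii), the paper covers $S$ by $(\Pi_S)\mmax(l)$ and pushes each such cell into some $P\in\Pi\mmax(l)$ via the proof of (ii). You instead apply your proof of (ii) pointwise on $\relint(S)$ to get $\relint(S)\subseteq\bigcup\Pi\mmax(l)$, then take closures. Both work; note that you are using your \emph{proof} of (ii), not its statement, since the statement only yields a single $P$.
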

\begin{proof}
(i) Clearly $\Pi_S$ is a polytopal complex~\cite[2.8.6]{rourkesan}. Let $R\in(\Pi_S)\mmax(r)$ for some $r\le l$, and let $\bb{w}$ be a point in $\relint(R)$. Then $\bb{w}\notin R'$ for any $R\not=R'\in\Pi_S$, and therefore there exists an open ball $B\subset\Rbb^{n+1}$ centered at $\bb{w}$ and such that $B\cap R=B\cap\bigcup\Pi_S=B\cap S$. The latter set is $l$-dimensional, and hence $r=l$.

(ii) Let $R\in(\Pi_S)\mmax(l)$ be as in (i). Then $R=S\cap P$ for some $P\in\Pi\mmax(p)$, with $l\le p$. By (i), $\Sigma_P$ is a pure $p$-dimensional complex, and hence $S\cap P$ is contained in some $p$-dimensional element $S'\cap P$ of $\Sigma_P$. Therefore $(S\cap S')\cap P=S\cap P$ is $p$-dimensional and $l\ge \dim(S\cap S')\ge p$; thus $P\in\Pi\mmax(l)$.

(iii) Let $S\in\Sigma\mmax(l)$. Then $S=\bigcup\Pi_S=\bigcup(\Pi_S)\mmax(l)$, and by the proof of (ii) every $R\in(\Pi_S)\mmax(l)$ is contained in some $P\in\Pi\mmax(l)$.
This shows the left-to-right inclusion, and the other inclusion is analogous.

(iv) is immediate from (iii).
\end{proof}

\begin{definition}
The\label{ref21} \newword{dimension} of the polytopal set $W$ is the dimension of any polytopal complex $\Sigma$ of which $W$ is the underlying set; this makes sense because of Lemma~\ref{ref20}(iv). Let $0\le l\le d=\dim(W)$.
The \newword{$l$-dimensional support} of~$W$ is the set of all  hyperplanes of the form $\aff(S)$, for some $S\in\Sigma\mmax(l)$. The set $A_1,A_2,\ldots,A_{r_l}$ of $l$-dimensional supporting hyperplanes is finite ---possibly empty--- and, by Lemma~\ref{ref20}(ii), depends on $W$ only.
For every $l$ such that $W$ has at least one $l$-dimensional supporting hyperplane, and for every Borel subset $B$ of~$W$, define
$$
\nu^l_W(B)=\nu_{A_1}\bigl(A_1\cap B\cap\bigcupp\Sigma\mmax(l)\bigr)
+\cdots+\nu_{A_{r_l}}\bigl(A_{r_l}\cap B\cap\bigcupp\Sigma\mmax(l)\bigr).
$$
Since $\nu_{A_i}(A_i\cap A_j)=\nu_{A_j}(A_i\cap A_j)=0$
for $i\not=j$, one checks easily that $\nu^l_W$ is a Borel finite measure on $W$.
\end{definition}

Note that $\nu^0_W$ is the counting measure on the set of isolated points of $W$. As we will be concerned with the measures $\nu^l_W$, for $l<d$, only in the last section of this paper, we save notation by writing $\nu_W$ for~$\nu^d_W$.

\section{Asymptotic distribution of primitive points}\label{ref24}

From Lemma~\ref{ref6} it is clear that in order to deal
with Ces\`aro means of discrete states we need some understanding of the distribution of primitive points in rational cones. 
For $d\ge1$ Jordan's generalized totient $\varphi_d:\Nbb\to\Nbb$ is the arithmetical function defined by
$$
\varphi_d(k)=\sum_{h|k}\mobiusfrac{k}{h}h^d=
k^d\prod_{\substack{p|k\\ \text{$p$ prime}}}\biggl(1-\frac{1}{p^d}\biggr),
$$
where $\mu$ is the M\"obius function,
defined by $\mu(k)=0$ if $k$ is not squarefree, $\mu(k)=1$ if $k$ is the product of an even number of distinct prime factors, and $\mu(k)=-1$ otherwise.
Clearly $\varphi_1$ is Euler's totient. We have
\begin{equation}\label{eq1}
\Phi_d(k)=\sum_{t\le k}\varphi_d(t)=\frac{1}{(d+1)\zeta(d+1)}k^{d+1}+O(k^d);
\end{equation}
here the first identity is a definition while the second one, which involves Riemann's zeta function $\zeta$, is well known~\cite[pp.~193--195]{murty08}.

As usual, the meaning of identities $\alpha(k)=\beta(k)+O\bigl(\gamma(k)\bigr)$ such as~(\ref{eq1}) above is that there exists a constant $C>0$ satisfying $\abs{\alpha(k)-\beta(k)}<C\gamma(k)$ for every $k$. Analogously, $\alpha(k)=\beta(k)+o\bigl(\gamma(k)\bigr)$ means that $\lim_{k\to\infty}\bigl(\alpha(k)-\beta(k)\bigr)/\gamma(k)=0$.
We will need the following fact about Ces\`aro convergence.

\begin{lemma}
Let\label{ref7} $\alpha:\Nbb\to\Rbb$ be bounded, let $0=n_0<n_1<n_2<\cdots$ be a strictly increasing sequence of natural numbers, and let $\beta\in\Rbb$. Then:
\begin{itemize}
\item[(i)] if
$$
\lim_{k\to\infty}\frac{1}{n_k-n_{k-1}}
\sum_{n_{k-1}<t\le n_k}\alpha(t)=\beta,
$$
then
\begin{equation}\label{eq2}
\lim_{k\to\infty}\frac{1}{n_k}
\sum_{t\le n_k}\alpha(t)=\beta;
\end{equation}
\item[(ii)] if \textnormal{(\ref{eq2})} holds and $\lim_{k\to\infty}(n_k-n_{k-1})/n_k=0$,
then
$$
\lim_{k\to\infty}\frac{1}{k}
\sum_{t\le k}\alpha(t)=\beta.
$$
\end{itemize}
\end{lemma}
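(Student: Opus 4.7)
The plan is to handle (i) by a direct weighted Cesàro argument (essentially Stolz--Cesàro, but short enough to write out), and (ii) by interpolating between consecutive terms of the subsequence $n_k$, where the hypothesis $(n_k-n_{k-1})/n_k\to 0$ combined with the boundedness of $\alpha$ makes the correction term negligible.

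For (i), set $\ell_j=n_j-n_{j-1}\ge 1$ and $b_j=\sum_{n_{j-1}<t\le n_j}\alpha(t)$, so the hypothesis says $b_j/\ell_j\to\beta$. Since $n_k=\sum_{j=1}^k\ell_j$ and $\sum_{t\le n_k}\alpha(t)=\sum_{j=1}^k b_j$, the conclusion is equivalent to showing
$$
\frac{1}{n_k}\sum_{j=1}^k\bigl(b_j-\beta\ell_j\bigr)\longrightarrow 0.
$$
Given $\epsilon>0$, pick $K$ so large that $\abs{b_j-\beta\ell_j}<\epsilon\ell_j$ for all $j>K$. Split the sum at $K$: the head $\sum_{j\le K}(b_j-\beta\ell_j)$ is a fixed finite quantity, hence vanishes when divided by $n_k\to\infty$; the tail satisfies $\bigl|\sum_{j>K}(b_j-\beta\ell_j)\bigr|\le\epsilon\sum_{j>K}\ell_j\le\epsilon n_k$. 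Letting $\epsilon\to 0$ gives the claim. Notice this half does not use boundedness of $\alpha$.

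For (ii), let $M$ be a bound for $\abs\alpha$ and, for each sufficiently large $k$, let $j=j(k)$ be the unique index with $n_{j-1}<k\le n_j$. Decompose
$$
\frac{1}{k}\sum_{t\le k}\alpha(t)=\frac{n_{j-1}}{k}\cdot\frac{1}{n_{j-1}}\sum_{t\le n_{j-1}}\alpha(t)+\frac{1}{k}\sum_{n_{j-1}<t\le k}\alpha(t).
$$
The hypothesis $(n_j-n_{j-1})/n_j\to 0$ forces $n_{j-1}/n_j\to 1$, and since $n_{j-1}/n_j\le n_{j-1}/k\le 1$, also $n_{j-1}/k\to 1$. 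Together with (\ref{eq2}), the first summand tends to $\beta$. For the residual, boundedness gives
$$
\biggl|\frac{1}{k}\sum_{n_{j-1}<t\le k}\alpha(t)\biggr|\le\frac{M(n_j-n_{j-1})}{n_{j-1}},
$$
which tends to $0$ by the same squeeze. As $k\to\infty$ forces $j(k)\to\infty$, combining the two estimates yields $(1/k)\sum_{t\le k}\alpha(t)\to\beta$.

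The one point requiring attention is the correspondence $k\mapsto j(k)$ in (ii): one must remember that convergence is along $k$, not $j$, but since $n_j\to\infty$ and $k\le n_j$ forces $j(k)\to\infty$, this is harmless. I do not anticipate any serious obstacle; the essential content is that the boundedness hypothesis converts the block hypothesis $(n_j-n_{j-1})/n_j\to 0$ into an $o(1)$ remainder on the proper scale.
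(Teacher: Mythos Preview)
Your proof is correct. The paper does not actually prove this lemma: it simply cites Cauchy (via Bromwich) for~(i) and \cite[Lemma~2.4.1]{kuipersnie74} for~(ii). Your arguments are precisely the standard ones found in those references---the Stolz--Ces\`aro weighted-average argument for~(i) and the block-interpolation with bounded remainder for~(ii)---so you have supplied what the paper omits, by essentially the same route. Your observation that~(i) does not require boundedness of $\alpha$ is correct and worth keeping.
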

\begin{proof}
(i) is due to Cauchy~\cite[p.~378]{bromwich1908}, and (ii) is~\cite[Lemma~2.4.1]{kuipersnie74}.
\end{proof}

\begin{lemma}
Let\label{ref5} $S\subseteq\oi^n$ be a rational polytope of dimension $d\ge1$. Let $u=a_1x_1+\cdots+a_nx_n+a_{n+1}$ be an affine function with integer coefficients which is strictly positive on $S$, and let $\bb{u}$ be its homogeneous correspondent.
Let $c$ be a positive multiple of the index of the $d$-dimensional rational polytope $S_1=\Cone(S)\cap\set{\bb{u}=1}$, and let $\Xi(S,t)$ be the number of primitive points in $\Cone(S)\cap\set{\bb{u}\le t}$. Then
$$
\lim_{k\to\infty}\frac{\Xi(S,ck)}{\Phi_d(ck)}=\nu_{\aff(S_1)}(S_1).
$$
\end{lemma}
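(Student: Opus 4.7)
The plan is to translate the primitive-point count into a count of rational points in the slice $S_1$ of prescribed denominator, use Möbius inversion to relate these to the Ehrhart function of $S_1$, apply Ehrhart's theorem to pin down the leading term, and sum using the asymptotics of $\Phi_d$ in~(\ref{eq1}). Denote by $b$ the index of $S_1$ and, for $t\in\Nbb$, let $N(t)$ be the number of rational points in $S_1$ of denominator exactly $t$. By Lemma~\ref{ref6}, primitive integer points $\bb{v}\in\Cone(S)$ with $\bb{u}(\bb{v})=t$ correspond bijectively to such rational points via $\bb{v}\mapsto\bb{v}/t$, whence $\Xi(S,T)=\sum_{t\le T}N(t)$. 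On the other hand, integer points of $tS_1=\Cone(S)\cap\set{\bb{u}=t}$ correspond to rational points in $S_1$ of denominator dividing $t$, so writing $L(S_1,t):=\#(tS_1\cap\Zbb^{n+1})$ we have $L(S_1,t)=\sum_{s\mid t}N(s)$, and Möbius inversion gives $N(t)=\sum_{s\mid t}\mu(t/s)L(S_1,s)$. A short argument---parallelism of the affine spaces $tA$, $A:=\aff(S_1)$, combined with the minimality of $b$---shows that $tA\cap\Zbb^{n+1}\neq\emptyset$ iff $b\mid t$; hence both $L(S_1,t)$ and $N(t)$ vanish unless $b\mid t$.

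Next I would apply Ehrhart's theorem. Fixing an integer point $\bb{w}_0\in bA\cap\Zbb^{n+1}$, the map $\psi:bA\to\Rbb^d$ of Definition~\ref{ref11} extends affinely to $bkA$ (translate by $k\bb{w}_0$) so as to identify $bkS_1\cap\Zbb^{n+1}$ with $k\psi(bS_1)\cap\Zbb^d$. Since $\psi(bS_1)$ is a full-dimensional rational polytope in $\Rbb^d$, Ehrhart's theorem combined with~(\ref{eq20}) yields
\[
L(S_1,bk)=\lambda^d\bigl(\psi(bS_1)\bigr)\,k^d+O(k^{d-1})=Vb^{d+1}k^d+O(k^{d-1}),
\]
with $V:=\nu_{\aff(S_1)}(S_1)$. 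Substituting this in the Möbius formula and using the identity $\varphi_d(t')=\sum_{d'\mid t'}\mu(t'/d')(d')^d$, we obtain
\[
N(bt')=Vb^{d+1}\varphi_d(t')+O\bigl(\tau(t')\,t'^{d-1}\bigr),
\]
with $\tau$ the divisor function.

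Finally, write $c=bc'$ and sum over $t'\le c'k$; the elementary estimate $\sum_{t'\le N}\tau(t')t'^{d-1}=O(N^d\log N)$ produces
\[
\Xi(S,ck)=\sum_{t'\le c'k}N(bt')=Vb^{d+1}\Phi_d(c'k)+O(k^d\log k).
\]
Combined with $\Phi_d(ck)\sim b^{d+1}\Phi_d(c'k)\sim (ck)^{d+1}/\bigl((d+1)\zeta(d+1)\bigr)$ from~(\ref{eq1}), this gives $\Xi(S,ck)/\Phi_d(ck)\to V$. The main technical obstacle is the Ehrhart step: carefully verifying that the affine extension of $\psi$ from $bA$ to $bkA$ indeed sends $bkS_1\cap\Zbb^{n+1}$ bijectively onto $k\psi(bS_1)\cap\Zbb^d$, and checking that the resulting Ehrhart leading term matches precisely the normalization $Vb^{d+1}$ dictated by Definition~\ref{ref11}; the Möbius bookkeeping and divisor-sum estimates that follow are then routine.
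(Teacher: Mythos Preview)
Your argument is correct and follows the same overall strategy as the paper: slice $\Cone(S)$ by the levels $\{\bb{u}=t\}$, relate primitive-point counts on each slice to the Ehrhart count via M\"obius inversion, extract the leading term from Ehrhart's theorem, and sum against the asymptotics~(\ref{eq1}) of~$\Phi_d$. The differences are in execution rather than substance. The paper invokes the Ehrhart quasipolynomial for $bS_1$ directly (allowing periodic lower-order coefficients) and controls the M\"obius-inverted error as $o(\varphi_d(t))$ by citing~\cite{panti12}, then appeals to the Ces\`aro Lemma~\ref{ref7}(i) to pass to the partial sums; you instead reduce explicitly to a full-dimensional polytope $\psi(bS_1)\subset\Rbb^d$, obtain a uniform $O(k^{d-1})$ remainder, and carry the resulting $O(\tau(t')t'^{\,d-1})$ through an elementary divisor-sum estimate. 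Your route is a little more self-contained (no external citation, no Ces\`aro step), at the price of having to verify carefully---as you note---that the affine extension of $\psi$ to $bkA$ really matches lattice points with $k\psi(bS_1)\cap\Zbb^d$; this follows readily from the description of $\psi$ in Example~\ref{ref28}, since with $\psi(\bb{v})=\varphi(\bb{v}-\bb{w}_0)$ linear in $\varphi$ one gets $\varphi(k\bb{v}-k\bb{w}_0)=k\varphi(\bb{v}-\bb{w}_0)$.
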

\begin{proof}
By the definition of the index $b$ of $S_1$, the integer points in $\Cone(S)\cap\set{\bb{u}\le ck}$ are contained in the disjoint union $\bigcup\set{tbS_1:1\le t\le ck/b}$. Let $L_{bS_1}(t)$
(respectively, $P_{bS_1}(t)$) be the number of integer (respectively, primitive) points in $tbS_1$. Since $L_{bS_1}(t)=\sum_{h|t}P_{bS_1}(h)$, we have by M\"obius inversion
$$
P_{bS_1}(t)=
\sum_{h|t}\mobiusfrac{t}{h}L_{bS_1}(h).
$$
By the Ehrhart theory~\cite[\S4.6.2]{stanley97}, \cite[Theorem~3.23]{beckrobins07}, $L_{bS_1}(h)$ is a quasipolynomial of degree $d$ and finite period~$e$, with~$e$ dividing the lcm of the denominators of the vertices of $bS_1$. More precisely, $L_{bS_1}(h)$ has the form
$$
L_{bS_1}(h)=
\nu_{\aff(bS_1)}(bS_1)h^d+l_{d-1}(h)h^{d-1}+\cdots+l_1(h)h+l_0(h),
$$
with $l_{d-1}(h),\ldots,l_0(h)$ rational numbers that only depend on the residue class of $h$ modulo $e$. Fix a number $M$ such that $\abs{l_j(h)}\le M$ for every $0\le j<d$ and $1\le h$.

We thus obtain
\begin{align*}
P_{bS_1}(t)&=
\sum_{h|t}\mobiusfrac{t}{h}
\biggl[\nu_{\aff(bS_1)}(bS_1)h^d+
\sum_jl_j(h)h^j\biggr]\\
&=\nu_{\aff(bS_1)}(bS_1)\varphi_d(t)+
\sum_j\biggl[
\sum_{h|t}\mobiusfrac{t}{h}l_j(h)h^j\biggr]\\
&=\nu_{\aff(bS_1)}(bS_1)\varphi_d(t)+
o\bigl(\varphi_d(t)\bigr),
\end{align*}
because
$$
\frac{1}{\varphi_d(t)}\Biggl\lvert
\sum_j\biggl[
\sum_{h|t}\mobiusfrac{t}{h}l_j(h)h^j\biggr]
\Biggr\rvert\le
\frac{dM}{\varphi_d(t)}\sum_{h|t}\biggl\lvert
\mobiusfrac{t}{h}\biggr\rvert h^{d-1},
$$
and the latter goes to $0$ as $t$ goes to
infinity~\cite[\S2]{panti12}.
Therefore
\begin{align*}
\Xi(S,ck)&=\sum_{t\le ck/b}P_{bS_1}(t)\\
&=\nu_{\aff(bS_1)}(bS_1)\Phi_d(ck/b)+
\sum_{t\le ck/b}o\bigl(\varphi_d(t)\bigr).
\end{align*}
So we get
$$
\Biggl\lvert
\frac{\Xi(S,ck)}{\Phi_d(ck)}-\nu_{\aff(bS_1)}(bS_1)
\frac{\Phi_d(ck/b)}{\Phi_d(ck)}
\Biggr\rvert
=
\Biggl\lvert
\frac{\sum_{t\le ck/b}o\bigl(\varphi_d(t)\bigr)}{\Phi_d(ck)}
\Biggr\rvert\\
\le
\frac{\sum_{t\le ck/b}
\bigl\lvert o\bigl(\varphi_d(t)\bigr)\bigr\rvert}
{\sum_{t\le ck/b}\varphi_d(t)},
$$
and the last expression goes to $0$ as $k$ goes to infinity by
Lemma~\ref{ref7}(i).
Finally, due to the asymptotic estimate~(\ref{eq1}),
$$
\lim_{k\to\infty}
\nu_{\aff(bS_1)}(bS_1)
\frac{\Phi_d(ck/b)}{\Phi_d(ck)}=
\nu_{\aff(bS_1)}(bS_1)\frac{1}{b^{d+1}}=\nu_{\aff(S_1)}(S_1).
$$
\end{proof}

\section{The main result}\label{ref27}

\begin{lemma}
Let\label{ref10} $W\subseteq\Rbb^{n+1}$ be a rational polytopal set, and let $\bar{\bb{y}}=\bb{y}_1,\bb{y}_2,\bb{y}_3,\ldots$, $\bar{\bb{z}}=\bb{z}_1,\bb{z}_2,\bb{z}_3,\ldots$ be sequences of points in $W$. Suppose that for every rational polytope $T\subseteq W$ the Ces\`aro average of $\one_T$ along $\bar{\bb{y}}$ exists, say
$$
\lim_{k\to\infty}\frac{1}{k}\sum_{t\le k}\one_T(\bb{y}_t)=A(\one_T,\bar{\bb{y}}).
$$
Then:
\begin{itemize}
\item[(i)] for every $f\in C(W)$, the Ces\`aro average $A(f,\bar{\bb{y}})$ exists;
\item[(ii)] the functional $A(-,\bar{\bb{y}}):C(W)\to\Rbb$ is linear, positive (i.e., $A(f,\bar{\bb{y}})\ge0$ if $f\ge0$), and normalized (i.e., $A(\one_W,\bar{\bb{y}})=1$). By the Riesz representation theorem, $A(-,\bar{\bb{y}})$ is induced by integration w.r.t.~a uniquely defined Borel probability measure on $W$;
\item[(iii)] if, for every $T$, the average $A(\one_T,\bar{\bb{z}})$ exists and equals $A(\one_T,\bar{\bb{y}})$, then $A(f,\bar{\bb{z}})=A(f,\bar{\bb{y}})$
for every $f\in C(W)$.
\end{itemize}
\end{lemma}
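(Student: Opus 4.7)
The plan is to approximate any $f \in C(W)$ uniformly by piecewise-constant functions whose values are fixed on the relative interiors of the cells of a fine rational simplicial subdivision of $W$. The key trick is that although $\one_{\relint(F)}$ is not itself the indicator of a rational polytope, inclusion-exclusion lets one express it as an integer linear combination of indicators of rational polytopes, so the hypothesis and linearity on finite sums yield the existence of its Ces\`aro average.

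For part (i): $W$ is compact (a finite union of compact polytopes), so $f \in C(W)$ is uniformly continuous. Given $\varepsilon > 0$, choose a rational simplicial complex $\Sigma$ with $W = \bigcupp \Sigma$ and with maximum simplex diameter below the $\varepsilon$-modulus of continuity of $f$; such $\Sigma$ is obtained by first triangulating any rational polytopal complex whose union is $W$ and then iterating barycentric subdivision, which preserves rationality of vertices and drives the diameter to zero. For each $F \in \Sigma$ pick $\bb{w}_F \in \relint(F)$, set $c_F = f(\bb{w}_F)$, and define
$$g = \sum_{F \in \Sigma} c_F\, \one_{\relint(F)}.$$
Since the relative interiors of the cells of $\Sigma$ partition $W$, $g$ is well-defined at every point of $W$ and $\|f - g\|_\infty \le \varepsilon$. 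Now $\one_{\relint(F)} = \one_F - \one_{\bigcupp\set{F' \in \Sigma\,:\,F' \subsetneq F}}$, and by inclusion-exclusion $\one_{\bigcup F'}$ is an integer linear combination of indicators of intersections of proper faces of $F$, each of which is itself a rational simplex of $\Sigma$. By the hypothesis and linearity, $A(\one_{\relint(F)},\bar{\bb{y}})$ exists for every $F$, and hence so does $A(g,\bar{\bb{y}})$. The bound $\|f - g\|_\infty \le \varepsilon$ then gives
$$\limsup_{k\to\infty}\frac{1}{k}\sum_{t\le k}f(\bb{y}_t) \;-\; \liminf_{k\to\infty}\frac{1}{k}\sum_{t\le k}f(\bb{y}_t) \;\le\; 2\varepsilon,$$
and letting $\varepsilon \to 0$ produces (i).

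Part (ii) then essentially writes itself: linearity and positivity of $A(-,\bar{\bb{y}})$ are immediate, and normalization follows from $\bb{y}_t\in W$, whence $\one_W(\bb{y}_t)=1$ for every $t$; the Riesz representation theorem on the compact Hausdorff space $W$ supplies the required Borel probability measure. For (iii), the hypothesis $A(\one_T,\bar{\bb{y}})=A(\one_T,\bar{\bb{z}})$ for every rational polytope $T$ extends by the same inclusion-exclusion expansions to equality on each $\one_{\relint(F)}$, hence to $A(g,\bar{\bb{y}})=A(g,\bar{\bb{z}})$ for the approximant $g$ above; combined with $\|f-g\|_\infty\le\varepsilon$ one obtains $|A(f,\bar{\bb{y}})-A(f,\bar{\bb{z}})|\le 2\varepsilon$, and (iii) follows as $\varepsilon\to 0$. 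I expect the only genuinely subtle point to be the treatment of the lower-dimensional skeleton: using relative interiors of cells rather than closed cells is what guarantees pointwise approximation of $f$ by $g$, sidestepping the need to control separately the Ces\`aro mass that the sequences may place on overlaps of maximal cells.
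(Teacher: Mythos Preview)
Your proposal is correct and follows essentially the same approach as the paper: approximate $f\in C(W)$ uniformly by step functions built from indicators of rational polytopes, use the hypothesis plus linearity to get existence of Ces\`aro averages for step functions, and pass to the limit. The paper's proof is a two-line sketch that simply asserts the density of such step functions in the sup norm, whereas you actually carry out that density argument via fine rational triangulation and the inclusion--exclusion device for expressing $\one_{\relint(F)}$ in terms of closed-cell indicators; this is exactly the right way to make the paper's sketch rigorous.
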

\begin{proof} Since by hypothesis the Ces\`aro averages along $\bar{\bb{y}}$ exist for all characteristic functions $\one_T$, they exist for all step functions, i.e., all $\Rbb$-linear combinations of such characteristic functions. The proof is now straightforward from the fact that the set of step functions is dense in $L_\infty(W)$ (the Banach space of all bounded functions on $W$ with the topology of uniform convergence).
\end{proof}

The following is our main result.

\begin{theorem}
Let\label{ref1} $H$ be a finitely presented cancellative hoop in which a unit $u$ has been fixed. Let $e_1,e_2,e_3,\ldots$ be an enumeration without repetitions of all the discrete extremal states of $(H,u)$ according to nondecreasing denominators. Then for every $h\in H$ the limit
$$
m_u(h)=\lim_{k\to\infty}\frac{1}{k}
\sum_{t\le k}e_t(h)
$$
exists, does not depend on the enumeration, and the function $m_u:H\to\Rbb\p$ thus defined is an automorphism-invariant state.
\end{theorem}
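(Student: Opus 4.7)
The overall strategy is to translate the convergence of the Ces\`aro means into an asymptotic counting problem for primitive points, to handle that problem along a carefully chosen subsequence of denominators via Lemma~\ref{ref5}, to promote the subsequence limit to a full-sequence limit via Lemma~\ref{ref7}(ii), and finally to pass from characteristic functions of rational polytopes to all $h\in H$ via Lemma~\ref{ref10}.

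Since $H$ is finitely presented, I fix a McNaughton representation $\rho:H\to M(W)\p$ with $W\subseteq\oi^n$ a rational polytopal set of dimension $d\ge1$ (if $d=0$ then $E_u$ is finite and the statement is vacuous). By Lemma~\ref{ref6} each discrete extremal state $e_t$ corresponds to a unique primitive point $\bb{v}_t\in\Cone(W)\cap\set{\bb{u}=b_t}$, where $b_t$ is the denominator of $e_t$, equivalently to $\bb{w}_t=\bb{v}_t/b_t\in W_1$. Then $e_t(h)=\bb{h}(\bb{w}_t)$, where $\bb{h}$ is the homogeneous correspondent of $\rho h$, whose restriction to $W_1$ is continuous. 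Viewing $W_1$ as the ambient rational polytopal set in Lemma~\ref{ref10}(i), it is enough to prove that for every rational polytope $T\subseteq W_1$ the limit $\lim_{k}k^{-1}\sum_{t\le k}\one_T(\bb{w}_t)$ exists.

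Fix such a $T$ and choose $S\subseteq\oi^n$ with $\Cone(S)=\Cone(T)$ (e.g.\ the image of $T$ under $\bb{w}\mapsto\bb{w}/\bb{x}_{n+1}(\bb{w})$, which is a rational polytope because its vertices are the projective images of the rational vertices of $T$). Let $c\ge1$ be a common positive multiple of the indices of $T$ and of $W_1$, and set $n_\ell=\Xi(W,c\ell)$: then $\sum_{t\le n_\ell}\one_T(\bb{w}_t)=\Xi(S,c\ell)$, since both count the primitive points in $\Cone(T)\cap\set{\bb{u}\le c\ell}$. Two applications of Lemma~\ref{ref5} together with the asymptotic~(\ref{eq1}) yield
\[
\frac{1}{n_\ell}\sum_{t\le n_\ell}\one_T(\bb{w}_t)=\frac{\Xi(S,c\ell)}{\Xi(W,c\ell)}\longrightarrow
\begin{cases}
\nu_{\aff(T)}(T)/\nu_{W_1}(W_1) & \text{if }\dim T=d,\\
0 & \text{if }\dim T<d,
\end{cases}
\]
the second case because $\Phi_{d'}(c\ell)/\Phi_d(c\ell)\to0$ for $d'<d$. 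By~(\ref{eq1}) again, $n_\ell$ grows like a positive multiple of $\ell^{d+1}$ while $n_\ell-n_{\ell-1}$ is of order $\ell^d$, so $(n_\ell-n_{\ell-1})/n_\ell\to0$, and Lemma~\ref{ref7}(ii) promotes the subsequence convergence to the full-sequence Ces\`aro convergence of $\one_T(\bb{w}_t)$.

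Consequently, by Lemma~\ref{ref10}(i),(ii), the limit $m_u(h):=\lim_kk^{-1}\sum_{t\le k}e_t(h)$ exists for every $h\in H$, is additive in $h$ (because the pointwise sum of the continuous functions $\bb{h}_1\restriction W_1,\bb{h}_2\restriction W_1$ equals $\bb{h_1+h_2}\restriction W_1$), takes values in $\Rbb\p$, and satisfies $m_u(u)=1$ since each $e_t(u)=1$; thus $m_u$ is a state on $(H,u)$. The subsequence sum $\sum_{t\le n_\ell}\one_T(\bb{w}_t)$ depends only on the \emph{set} of discrete extremal states of denominator $\le c\ell$ and not on their ordering within blocks of equal denominator, so any two admissible enumerations produce the same limit for each $\one_T$; Lemma~\ref{ref10}(iii) then propagates this to all $h\in H$. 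Finally, any $\sigma\in\Aut(H)$ with $\sigma(u)=u$ induces a denominator-preserving bijection $e\mapsto e\circ\sigma$ of $E_u$ (because $(e\circ\sigma)[H]=e[H]$ and $(e\circ\sigma)(u)=e(u)=1$), so $(e_t\circ\sigma)_t$ is itself an admissible enumeration, and the independence just proved yields $m_u(\sigma h)=m_u(h)$. The main technical subtlety will be the dimensional case split in the asymptotic ratio above, together with verifying that a single choice of subsequence $(n_\ell)$ works for each $T$ even though the parameter $c$ depends on $T$.
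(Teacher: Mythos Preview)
Your overall strategy matches the paper's proof closely: reduce to Ces\`aro convergence of $\one_T$ for rational polytopes $T\subseteq W_1$, establish that convergence along the subsequence $n_\ell=\Xi(W,c\ell)$ using Lemma~\ref{ref5}, promote to the full sequence via Lemma~\ref{ref7}(ii), and then invoke Lemma~\ref{ref10}. The independence-of-enumeration and automorphism-invariance arguments are essentially identical to the paper's.

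There is, however, a genuine gap in your handling of the denominator $\Xi(W,c\ell)$. Lemma~\ref{ref5} is stated and proved only for a single rational \emph{polytope} $S$; the set $W$ (hence $W_1$) is a rational polytopal \emph{set}, generally not convex, and has no well-defined ``index'', so the phrase ``a common positive multiple of the indices of $T$ and of $W_1$'' is meaningless, and ``two applications of Lemma~\ref{ref5}'' does not account for the second one. The paper closes this gap by choosing a polytopal complex $\Sigma$ with $W_1=\bigcup\Sigma$ and $T\in\Sigma$, taking $c$ to be the lcm of the indices of the $d$-dimensional cells of $\Sigma$, and writing $\Xi(W,ck)=\sum_{S\in\Sigma}\epsilon(S)\,\Xi(S,ck)$ by inclusion--exclusion; Lemma~\ref{ref5} is then applied cell by cell, the lower-dimensional cells contributing $o(\Phi_d(ck))$, to obtain the key estimate $\Xi(W,ck)=\nu_{W_1}(W_1)\Phi_d(ck)+o(\Phi_d(ck))$. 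Your growth claims for $n_\ell$ and $n_\ell-n_{\ell-1}$, and hence the applicability of Lemma~\ref{ref7}(ii), also rest on this estimate, so the gap propagates.

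Two minor remarks. First, the worry you flag at the end---that $c$ depends on $T$---is a non-issue: you are proving full-sequence convergence of $k^{-1}\sum_{t\le k}\one_T(\bb{w}_t)$ for each $T$ separately, and the auxiliary subsequence used to establish it may freely depend on $T$. (The paper happens to use a single $c$ because it fixes $\Sigma$ first, but this is convenience, not necessity.) Second, the $d=0$ case is not vacuous: $E_u$ is then finite, the Ces\`aro limit is the finite average, and one must still check that this average is a state and is automorphism-invariant; the paper does spell this out.
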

\begin{proof}
Let $n\ge0$ be the least integer such that $H$ can be generated by $n+1$ elements $\vect h1{{n+1}}$.

\begin{remark}
We\label{ref17} do not require $\vect h1{{n+1}}\le u$. In particular, the MV-algebra $\Gamma(H,u)$ is still finitely generated ---a simple application of the Riesz decomposition property~\cite[Proposition~2.2]{goodearl86}---, but the least cardinality of a generating set may be larger than $n+1$. For example, let $H=\free{2}{\CH}$, and let $u=x_1\lor(-kx_1+1)$ for some integer $k\ge1$. Then $n=1$, because $H$ is not generable by a single element. On the other hand,
$E_u$ contains $k+2$ discrete states of denominator $1$, namely the states $h\mapsto h(w)/u(w)$ for $w\in\set{0,1,1/2,1/3,1/4,\ldots,1/(k+1)}$.
The usual representation of $\Gamma(H,u)$ as an MV-algebra, i.e., as a quotient of $\free{m}{\MV}$ for some integer~$m$, assumes that $u$ is represented by the constant function $\one$. Now, the only points in 
the unit cube $\oi^m$ that correspond to states of denominator $1$ 
w.r.t.~the unit $\one$ are the $2^m$ vertices of the cube. In our case we need $k+2$ such points, hence presenting $\Gamma(H,u)$ as an MV-algebra requires at least $\log_2(k+2)$ generators.
\end{remark}

Our $H$ can then be presented as $\angles{\vect x1{{n+1}};
f_1(\bar x)=g_1(\bar x),\ldots,f_r(\bar x)=g_r(\bar x)}$.
As cited in~\S\ref{ref3}, $\Free_{n+1}(\CH)\simeq M(\oi^n)\p$. Also, $W=\set{w\in\oi^n:f_i(w,1-(w_1\lor\cdots\lor w_n))=g_i(w,1-(w_1\lor\cdots\lor w_n))\text{ for every $i$}}$ is a rational polytopal set of dimension $0\le d\le n$, and
the hoop homomorphism $\rho:H\to M(W)\p$ defined by \hbox{$\rho h_1=x_1\restriction W$}, $\ldots$,
\hbox{$\rho h_n=x_n\restriction W$},
$\rho h_{n+1}=\bigl(\one-(x_1\lor\cdots\lor x_n)\bigr)\restriction W$
is a McNaughton representation. 
We adopt the notation of Definition~\ref{ref18}.

The case $d=0$ is trivial. Indeed, in this case $W$ is just a finite set of rational points, and by Lemma~\ref{ref4}(ii) the given enumeration is finite, say $e_1,e_2,\ldots,e_r$. Therefore, $m_u$ is always defined and is a state. Given any automorphism $\sigma$ of $H$ such that $\sigma(u)=u$, the sets $\set{e_1,e_2,\ldots,e_r}$
and $\set{e_1\circ\sigma,e_2\circ\sigma,\ldots,e_r\circ\sigma}$
are equal, and hence $m_u(h)=m_u\bigl(\sigma(h)\bigr)$ for every $h$.

Let now $d\ge1$; by Lemma~\ref{ref6}, the enumeration 
$e_1,e_2,e_3,\ldots$ corresponds to an enumeration
$\bb{w}_1,\bb{w}_2,\bb{w}_3,\ldots$ of all primitive points in $\Cone(W)$ according to nondecreasing values of $\bb{u}$.
Let $\bb{w}'_t=\bb{w}_t/\bb{u}(\bb{w}_t)$ be the point of intersection of $W_1$ with the ray $\Rbb\p\bb{w}_t$;
by Lemma~\ref{ref6} $e_t(h)=\bb{h}(\bb{w}'_t)$, so we must show that
\begin{equation}\label{eq3}
m_u(h)=\lim_{k\to\infty}\frac{1}{k}
\sum_{t\le k}\bb{h}(\bb{w}'_t)
\end{equation}
exists.

Since $\bb{h}\restriction W_1$ is a continuous function, we can use Lemma~\ref{ref10}(i) to prove the convergence of~(\ref{eq3}). Let then $T$ be a rational polytope contained in $W_1$.
Using~\cite[2.8(6)]{rourkesan} we can construct a polytopal complex $\Sigma$ such that $W_1=\bigcup\Sigma$ and $T$ is a union of elements of $\Sigma$; we can thus safely assume $T\in\Sigma$.
Note that we do not need that $\Sigma$ is a simplicial complex, nor that it satisfies any unimodularity condition (see~\cite{mundici08} for unimodular simplicial complexes).
Let $c$ be the lcm of the $d$-indices of the $d$-dimensional polytopes in $\Sigma$.
The set of primitive points in $\Cone(W)$ is then partitioned in \newword{blocks} $B_1,B_2,B_3,\ldots$, where $B_k=\set{\bb{w}\in\Cone(W):\bb{w}
\text{ is primitive and }c(k-1)<\bb{u}(\bb{w})\le ck}$.
For every $k\ge1$, let $n_k$ be such that $\bb{w}_{n_k}\in B_k$ and
$\bb{w}_{n_k+1}\in B_{k+1}$.
Generalizing the notation in Lemma~\ref{ref5}, let $\Xi(W,t)$ be the number of primitive points in $\Cone(W)\cap\set{\bb{u}\le t}$. Then
$$
\Xi(W,ck)=\sum\set{\epsilon(S)\cdot\Xi(S,ck):S\in\Sigma},
$$
where $\epsilon(S)$ is either $+1$, $0$ or $-1$, according to the inclusion-exclusion principle applied to $S$ w.r.t.~the combinatorial structure of $\Sigma$. Since $\epsilon(S)$ is necessarily $+1$ if $S$ is $d$-dimensional, and $\Xi(S,ck)=o\bigl(\Phi_d(ck)\bigr)$ if $\dim(S)<d$, from Lemma~\ref{ref5} and the estimate~(\ref{eq1}) we obtain
\begin{equation}\label{eq4}
\Xi(W,ck)=\nu_{W_1}(W_1)\Phi_d(ck)+o\bigl(\Phi_d(ck)\bigr)=
\frac{\nu_{W_1}(W_1)}{(d+1)\zeta(d+1)}k^{d+1}+o(k^{d+1}).
\end{equation}

We thus have, for every $k\ge1$,
$$
\frac{1}{n_k}\sum_{t\le n_k}\one_T(\bb{w}'_t)=
\frac{1}{n_k}\sum_{t\le n_k}\one_{\Cone(T)}(\bb{w}_t)=
\frac{\Xi(T,ck)}{\Xi(W,ck)}=
\frac{\Xi(T,ck)/\Phi_d(ck)}{\Xi(W,ck)/\Phi_d(ck)}.
$$
Therefore, by Lemma~\ref{ref5}, (\ref{eq1}) and~(\ref{eq4}), we obtain
\begin{equation}\label{eq5}
\lim_{k\to\infty}\frac{1}{n_k}\sum_{t\le n_k}\one_T(\bb{w}'_t)=
\frac{\nu_{W_1}(T)}{\nu_{W_1}(W_1)};
\end{equation}
note that $\nu_{W_1}(T)=0$ if $\dim(T)<d$.
{F}rom~(\ref{eq4}) it is immediate that $\lim_{k\to\infty}(n_k-n_{k-1})/n_k=0$, so we apply Lemma~\ref{ref7}(ii) and conclude
\begin{equation}\label{eq6}
\lim_{k\to\infty}\frac{1}{k}\sum_{t\le k}\one_T(\bb{w}'_t)=
\frac{\nu_{W_1}(T)}{\nu_{W_1}(W_1)}.
\end{equation}

Any enumeration of the discrete extremal states of $(H,u)$
according to nondecreasing denominators must induce the same partition $B_1,B_2,B_3,\ldots$ of the primitive points in $\Cone(W)$. Since the limits~(\ref{eq5}) and~(\ref{eq6}) only depend on this latter block partition, 
Lemma~\ref{ref10}(iii) shows that~(\ref{eq3}) does not depend on the enumeration.
By Lemma~\ref{ref10}(ii), $m_u$ is a state on $(H,u)$.

Finally, let $\sigma$ be an automorphism of $H$ that fixes $u$. As in the $d=0$ case, precomposition with $\sigma$ is a 
homeomorphism of $E_u$ preserving the discrete states and their denominators.
Therefore $e_1\circ\sigma,e_2\circ\sigma,e_3\circ\sigma,\ldots$ is another enumeration of the discrete extremal states of $(H,u)$ according to nondecreasing denominators.
By the above remarks, the Ces\`aro limit of $h$ w.r.t.~the new enumeration, namely $m_u\bigl(\sigma(h)\bigr)$, equals the Ces\`aro limit of $h$ w.r.t.~the old enumeration, namely $m_u(h)$. Hence $m_u$ is automorphism-invariant.
\end{proof}

\begin{corollary}
Adopt\label{ref12} the hypotheses of Theorem~\ref{ref1} and the relative notation. Then, for every $h\in H$, we have
$$
m_u(h)=\frac{1}{\nu_{W_1}(W_1)}\int_{W_1}\bb{h}\restriction W_1\ud \nu_{W_1}.
$$
\end{corollary}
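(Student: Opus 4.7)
The plan is to realize that the formula is essentially a rereading of what has already been established inside the proof of Theorem~\ref{ref1}. Adopting the notation of that proof and of Definition~\ref{ref18}, let $\bar{\bb{w}}'=\bb{w}'_1,\bb{w}'_2,\bb{w}'_3,\ldots$ be the sequence of points of $W_1$ obtained from the enumeration of discrete extremal states, so that $e_t(h)=\bb{h}(\bb{w}'_t)$ by Lemma~\ref{ref6}. The case $d=0$ is immediate, since then $W_1$ is a finite set, $\nu_{W_1}$ is counting measure, and both sides of the proposed formula reduce to the finite average $r^{-1}\sum_{t=1}^r e_t(h)$ computed in the $d=0$ case of Theorem~\ref{ref1}; so I focus on $d\ge 1$.

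First, I would extract from the $d\ge 1$ part of the proof of Theorem~\ref{ref1}, precisely from identity~(\ref{eq6}), that for every rational polytope $T\subseteq W_1$ the Cesàro average $A(\one_T,\bar{\bb{w}}')$ equals $\nu_{W_1}(T)/\nu_{W_1}(W_1)$. Although (\ref{eq6}) is stated there for $T$ belonging to a chosen complex $\Sigma$, the triangulation freedom exploited in that proof (via the refinement result from \cite{rourkesan}) allows one to insert an arbitrary rational polytope $T\subseteq W_1$ into such a complex, so the formula holds without restriction on $T$. Then Lemma~\ref{ref10}(i)-(ii) tells me that $A(-,\bar{\bb{w}}')$ extends to a positive normalized linear functional on $C(W_1)$, represented by integration against a unique Borel probability measure on~$W_1$.

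Second, I would compare $A(-,\bar{\bb{w}}')$ with the functional $\Lambda:f\mapsto \nu_{W_1}(W_1)^{-1}\int_{W_1}f\,\ud\nu_{W_1}$, which is manifestly a positive normalized linear functional on $C(W_1)$. By the previous paragraph the two agree on characteristic functions of rational polytopes in $W_1$, hence on all finite $\Rbb$-linear combinations of such indicators. Since such step functions are dense in $L_\infty(W_1)$ with respect to the sup norm (the very density argument invoked in the proof of Lemma~\ref{ref10}), and both $A(-,\bar{\bb{w}}')$ and $\Lambda$ are sup-norm continuous on $C(W_1)\subseteq L_\infty(W_1)$, they coincide on all of $C(W_1)$. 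By the uniqueness clause in the Riesz representation theorem, the two Borel probability measures representing them agree. Applying this to the continuous function $\bb{h}\restriction W_1$ and recalling that $m_u(h)=A(\bb{h}\restriction W_1,\bar{\bb{w}}')$ by Theorem~\ref{ref1}, I obtain the claimed formula.

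The main potential obstacle is really only psychological: once identity~(\ref{eq6}) is upgraded to arbitrary rational polytopes $T\subseteq W_1$, everything reduces to the standard uniqueness argument for positive linear functionals on a compact Hausdorff space. The only points requiring care are the universal validity of~(\ref{eq6}) noted above and the sup-norm density of polytopal step functions, both of which are already in the toolkit assembled earlier in the paper.
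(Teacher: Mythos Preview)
Your proof is correct and follows essentially the same route as the paper's own argument: handle $d=0$ by recognizing $\nu_{W_1}$ as counting measure, and for $d\ge1$ combine Lemma~\ref{ref10}(ii) with identity~(\ref{eq6}) to identify the Borel measure representing $m_u$ as $\nu_{W_1}/\nu_{W_1}(W_1)$. The paper compresses your density-and-uniqueness step into the single phrase ``by~(\ref{eq6}), that measure is $\nu_{W_1}/\nu_{W_1}(W_1)$'', but the content is the same.
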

\begin{proof}
If $d=0$, then $\nu_{W_1}$ is the counting measure on the finite set $W_1=\set{\bb{w}'_1,\ldots,\bb{w}'_r}$, which is in bijection with $\set{\vect e1r}$; since $e_t(h)=\bb{h}(\bb{w}'_t)$, our statement is immediate. If $d\ge1$ then, by Lemma~\ref{ref10}(ii) applied to $W_1$, $m_u(h)$ is given by integration of $\bb{h}\restriction W_1$ w.r.t.~a certain Borel finite measure on $W_1$. By~(\ref{eq6}), that measure is $\nu_{W_1}/\nu_{W_1}(W_1)$.
\end{proof}

By realizing $W_1$ as the underlying set of a simplicial complex $\Delta$ such that \hbox{$\bb{h}\restriction\Delta$} is affine for every $S\in\Delta$ (this is always possible, see, e.g., the proof of~\cite[Theorem~4.1]{mundici08}), Lemma~\ref{ref25} provides an efficient way for computing the expression for $m_u(h)$ given by Corollary~\ref{ref12}; we will make use of this in~\S\ref{ref29}.

\section{Topological dimension without topology}

The maximal spectrum of $H$ is an invariant of $H$, and in particular its topological dimension ---which is an integer $\ge0$
\cite[Definition~III~1]{hurewiczwallman41}, \cite[Definition~1.1.1]{engelking78}--- is implicitly determined by $H$. The results in \S\ref{ref24} and \S\ref{ref27}
provide an expression for this integer that bypasses topology; we will return to dimensional issues in the final section of this paper.

\begin{theorem}
Let\label{ref15} $H$ be a finitely presented cancellative hoop. Let $u\in H$ be a unit and let $\sharp(E_u\le t)$ denote the number of discrete extremal states of $(H,u)$ of denominator $\le t$. Then the limit
$$
l=\lim_{t\to\infty}\frac{\log\bigl(\sharp(E_u\le t)\bigr)}{\log(t)}
$$
exists and is an integer not depending on $u$.
We have $l=0$ iff $E_u$ is finite iff $\MaxSpec H$ has topological dimension $0$.
Otherwise, $2\le l\le$ (the minimum cardinality of a generating set for $H$), and $\MaxSpec H$ has topological dimension $l-1$.
\end{theorem}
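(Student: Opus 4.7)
The plan is to transfer the problem to a lattice-point count in a rational cone and read off $l$ directly from the leading exponent already isolated in~\eqref{eq4}.

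First I would fix a McNaughton representation $\rho:H\to M(W)\p$ of $H$ exactly as in the proof of Theorem~\ref{ref1}, so that $W\subseteq\oi^n$ is a rational polytopal set and $n+1$ is the minimum cardinality of a generating set of $H$. Write $d=\dim W=\dim W_1$. By Lemma~\ref{ref6}, $\sharp(E_u\le t)=\Xi(W,t)$, the number of primitive points in $\Cone(W)\cap\set{\bb{u}\le t}$. If $d=0$ then $W_1$ is a finite set of rational points, $E_u$ is finite, $\Xi(W,t)$ is eventually constant, and $l=0$. If $d\ge 1$, the estimate~\eqref{eq4} yields $\Xi(W,ck)=C\,k^{d+1}+o(k^{d+1})$ with $C=\nu_{W_1}(W_1)/((d+1)\zeta(d+1))>0$, where $c$ is the lcm of the indices of the $d$-dimensional polytopes in a complex structure on $W_1$. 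For arbitrary $t$, sandwiching between two consecutive multiples of $c$ and using monotonicity of $\Xi(W,\cdot)$ gives $\Xi(W,t)=C't^{d+1}+o(t^{d+1})$; taking logarithms and dividing by $\log t$ yields $l=d+1\ge 2$. In particular the limit exists and $l\notin\{1\}$.

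Independence from $u$ follows because replacing $u$ by another unit $v$ only replaces $W_1$ by the homeomorphic slice $\Cone(W)\cap\set{\bb{v}=1}$ (Lemma~\ref{ref4}(iii) and Definition~\ref{ref18}), so $d$ is preserved. The inclusion $W\subseteq\oi^n$ gives $d\le n$, hence $l\le n+1$, matching the minimum generating cardinality. Finally, Lemma~\ref{ref4}(ii) identifies $\MaxSpec H$ with $W$, the underlying set of a finite rational polytopal complex of dimension $d$; the classical equality of combinatorial and Lebesgue covering dimension for compact polyhedra then gives $\dim\MaxSpec H=d=l-1$ when $d\ge 1$, and $=0$ when $d=0$. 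The three-way equivalence in the $l=0$ case is then immediate from the case split above.

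The most delicate step is the last: invoking that the topological dimension of a compact polyhedron equals its combinatorial dimension. This is standard for finite simplicial complexes (see, e.g., \cite[Ch.~III]{hurewiczwallman41} or \cite[\S1.5]{engelking78}), and a rational polytopal complex admits a simplicial refinement of the same dimension, so the fact extends to our $W$. Everything else is bookkeeping over the machinery already established in \S\ref{ref24} and in the proof of Theorem~\ref{ref1}; in particular the key asymptotic \eqref{eq4} is doing all the real work.
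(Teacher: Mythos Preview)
Your proposal is correct and follows essentially the same route as the paper: identify $\sharp(E_u\le t)$ with $\Xi(W,t)$ via Lemma~\ref{ref6}, sandwich arbitrary $t$ between consecutive multiples of $c$, read off the exponent $d+1$ from the asymptotic~\eqref{eq4}, and conclude by invoking the coincidence of polytopal and topological dimension (the paper cites~\cite[pp.~101--102]{engelking78} for this). Your treatment is slightly more explicit on the independence from $u$ and the bound $l\le n+1$, but note that the former is immediate once you write $d=\dim W$, since $W$ is fixed by the representation and does not involve $u$ at all; the detour through the homeomorphic slice $\Cone(W)\cap\{\bb{v}=1\}$ is unnecessary.
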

\begin{proof}
As in the proof of Theorem~\ref{ref1}, we identify $H$ with $M(W)\p$, where $W$ is a polytopal set in $\oi^n$ and $n+1$ is the minimum cardinality of a generating set.
Clearly $l=0$ if $E_u$ is a finite set. We know from Lemma~\ref{ref4}(ii) that $W$, $\MaxSpec H$ and $E_u$ are canonically homeomorphic, so if $E_u$ is not a finite set then $W$ has dimension $d\ge1$. Let then $c$ be as in the proof
of Theorem~\ref{ref1} and let $k(t)=\lceil t/c\rceil$. Then $c\bigl(k(t)-1\bigr)<t\le ck(t)$ and
$$
\frac{\log\bigl(\Xi\big(W,c(k(t)-1)\bigr)\bigr)}{\log\bigl(ck(t)\bigr)}
\le
\frac{\log\bigl(\Xi\big(W,t)\bigr)\bigr)}{\log(t)}
\le
\frac{\log\bigl(\Xi\big(W,ck(t)\bigr)\bigr)}{\log\bigl(c(k(t)-1)\bigr)}.
$$
Since $\sharp(E_u\le t)=\Xi(W,t)$, our statements follow readily from~(\ref{eq4}) and the highly nontrivial fact that the dimension of $W$ as a polytopal set coincides with its topological dimension~\cite[pp.~101--102]{engelking78}.
\end{proof}

\begin{remark}
In\label{ref16} general, in the statements of Theorem~\ref{ref1} and of Theorem~\ref{ref15} it is not possible to replace ``finitely presented'' with ``finitely generated''. For example, let $0<\alpha<1$ be any irrational number, and let $W\subset\oi^2$ be the line segment of extrema $(\alpha,0)$ and $(\alpha,1)$. Then $\bigl(M(W)\p,\one\bigr)$ is a three-generated, countably presented cancellative hoop with unit that has no discrete extremal state at all.
\end{remark}

\section{Absolute continuity}

Let $H$ be a separating subhoop of some $C(X)\p$ (i.e., the inclusion map of $H$ in $C(X)\p$ is a representation).
Assume that $H$, as an abstract hoop, is finitely presentable. Let $u_1,u_2\in H$ be two arbitrary units; we are not assuming any structure either on $X$ (besides being a compact Hausdorff space), or on the elements of $H$ (besides being continuous functions). Note that $u_1$ and $u_2$ may lie in different orbits under the action of the automorphism group of $H$; equivalently, the MV-algebras $\Gamma(H,u_1)$ and $\Gamma(H,u_2)$ may not be isomorphic. This is the case, e.g., of the functions $\one$ and $u$ of Remark~\ref{ref17}, which are both units in $M(\oi)\p$.

Let $m_1,m_2$ be the automorphism-invariant states determined by Theorem~\ref{ref1}. By Lemma~\ref{ref4}(iv) there exist two uniquely determined Borel finite measures $\mu_1,\mu_1$ on $X$ such that
$$
m_i(h)=\int_X h\ud \mu_i,
$$
for every $h\in H$ and $i=1,2$.

\begin{theorem}
Under\label{ref2} the above hypotheses and notation, each of $\mu_1$ and $\mu_2$ is absolutely continuous w.r.t.~the other. More precisely, let $d$ be the topological dimension of $X$, as given by Theorem~\ref{ref15}. We have:
\begin{itemize}
\item[(i)] if $d=0$, then
\begin{equation}\label{eq8}
\mathrm{d}\mu_2=\frac{u_1}{u_2} \mathrm{d}\mu_1;
\end{equation}
\item[(ii)] if $d\ge1$, then
$$
\mathrm{d}\mu_2=C\biggl(\frac{u_1}{u_2}\biggr)^{d+2} \mathrm{d}\mu_1,
$$
where $C$ is the constant
$$
C=
\biggl(\int_X \frac{u_1^{d+2}}{u_2^{d+1}}\,\mathrm{d}\mu_1\biggr)\m.
$$
\end{itemize}
\end{theorem}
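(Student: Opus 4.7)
The plan is to transfer everything to a McNaughton representation of $H$. Exactly as in the proof of Theorem~\ref{ref1}, $H$ can be presented as $M(W)\p$ for a rational polytopal set $W\subseteq\oi^n$ of dimension $d$, and Lemma~\ref{ref4}(iii) gives a homeomorphism $R\colon X\to W$ together with a unit $f\in C(X)\p$ satisfying $h=f\cdot((\rho h)\circ R)$ for every $h\in H$. For each $u_i$, set $W_1^{(i)}=\Cone(W)\cap\set{\bb{u}_i=1}$ and let $\Phi_i\colon X\to W_1^{(i)}$ be the composition of $R$ with the radial projection $\bb{w}\mapsto\bb{w}/\bb{u}_i(\bb{w})$. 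Linearity of $\bb{u}_j$ and positive homogeneity of $\bb{h}$, together with the unit identity of Lemma~\ref{ref4}(iii), yield $\bb{u}_2\circ\Phi_1=u_2/u_1$ and $\bb{h}\circ\Phi_1=h/u_1$ as functions on $X$; Corollary~\ref{ref12} then identifies the pushforward $\Phi_{i*}\mu_i$ with $\nu_{W_1^{(i)}}/\nu_{W_1^{(i)}}(W_1^{(i)})$ twisted by the factor $1/(u_i\circ\Phi_i\m)$. When $d=0$, $W$ is a finite set of rational points and each $\nu_{W_1^{(i)}}$ is counting measure, so comparing atomic weights at each point of $X$ gives~(i) directly.

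For $d\ge 1$, the heart of the argument is a comparison of $\nu_{W_1^{(1)}}$ and $\nu_{W_1^{(2)}}$, which I carry out via a \emph{unit-independent} auxiliary $(d{+}1)$-dimensional measure $\Omega$ on $\Cone(W)$: on each $(d{+}1)$-dimensional linear subspace $V\subseteq\Rbb^{n+1}$ arising as the linear span of a cone over a maximal cell of $W$, $\Omega$ is the Lebesgue measure normalized so that $V\cap\Zbb^{n+1}$ has covolume $1$; lower-dimensional pieces are $\Omega$-null. Using Lemma~\ref{ref25} together with a $\Zbb$-basis $\bb{m}_1,\ldots,\bb{m}_{d+1}$ of $V\cap\Zbb^{n+1}$, a direct computation shows that for every rational $d$-simplex $S\subseteq W_1^{(i)}$ the cone pyramid $\set{t\bb{w}':\bb{w}'\in S,\ 0\le t\le 1}$ has $\Omega$-measure $\nu_{W_1^{(i)}}(S)/(d+1)$. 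Parametrizing $\Cone(W)$ radially by $\bb{p}=t\bb{w}'$ with $\bb{w}'\in W_1^{(i)}$ and $t=\bb{u}_i(\bb{p})$, this upgrades via Fubini to the slice formula
$$
\int_{\Cone(W)\cap\set{\bb{u}_i\le T}} F\ud\Omega = \int_0^T t^d\int_{W_1^{(i)}} F(t\bb{w}')\ud\nu_{W_1^{(i)}}(\bb{w}')\ud t
$$
valid for every Borel $F\ge 0$. Applying it to $F=\bb{h}$ on $\Cone(W)\cap\set{\bb{u}_2\le 1}$ first with $i=2$, $T=1$, and then with $i=1$ (where the same region reads $\set{t\le 1/\bb{u}_2(\bb{w}')}$), and invoking $\bb{h}(t\bb{w}')=t\bb{h}(\bb{w}')$ to collapse both inner $t$-integrals, produces the key identity
$$
\int_{W_1^{(2)}}\bb{h}\ud\nu_{W_1^{(2)}} = \int_{W_1^{(1)}}\bb{h}\cdot\bb{u}_2^{-(d+2)}\ud\nu_{W_1^{(1)}}.
$$

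Combining the key identity with Corollary~\ref{ref12} and transferring through $\Phi_1$ via $\bb{u}_2\circ\Phi_1=u_2/u_1$ and $\bb{h}\circ\Phi_1=h/u_1$ yields
$$
\int_X h\ud\mu_2 = C\int_X h\,(u_1/u_2)^{d+2}\ud\mu_1
$$
for every $h\in H$, where $C=\nu_{W_1^{(1)}}(W_1^{(1)})/\nu_{W_1^{(2)}}(W_1^{(2)})$. A Stone--Weierstrass-type density argument (as in the proof of Lemma~\ref{ref4}(iv)) extends this identity from $h\in H$ to all $h\in C(X)$, so $\mathrm{d}\mu_2=C(u_1/u_2)^{d+2}\mathrm{d}\mu_1$ as Borel measures on $X$; specializing to $h=u_2$ and using $m_{u_2}(u_2)=1$ pins $C$ down to the expression in~(ii).

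The main obstacle is the normalization match in the slice formula, namely the equality $\Omega(\text{cone pyramid over }S)=\nu_{W_1^{(i)}}(S)/(d+1)$. The delicate point is that $\nu_{W_1^{(i)}}$ is defined via an index attached to the $d$-dimensional affine slice $\aff(S)\subseteq\set{\bb{u}_i=1}$, whereas $\Omega$ uses the $(d{+}1)$-dimensional linear span $V$ of $\Cone(S)$; these involve genuinely different lattices, and only after careful use of Lemma~\ref{ref25} does one see that the $t^d$ radial Jacobian absorbs the discrepancy exactly, so that $\Omega$ is a legitimate common refinement of both slice measures.
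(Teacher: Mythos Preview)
Your proof is correct and reaches the same constant $C=\nu_{W_1^{(1)}}(W_1^{(1)})/\nu_{W_1^{(2)}}(W_1^{(2)})=D_1/D_2$ as the paper, but the route is genuinely different.

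The paper works directly with the two slices: it sets up the radial diffeomorphism $G\colon b_1S_1\to b_2S_2$, $G(\bb{w})=b_2\bb{w}/\bb{u}_2(\bb{w})$ between (scaled) affine slices over each $d$-cell $S\in\Sigma$, proves a dedicated Jacobian lemma (Lemma~\ref{ref19}) giving $j_F(p)=(b_2/\bb{u}_2(\psi_1^{-1}p))^{d+1}$, and then runs a long chain of change-of-variable equalities to obtain $\int_{S_2}(f/u_2)\circ R_2\,\mathrm{d}\nu_{W_2}=\int_{S_1}\bigl((f/u_2)(u_1/u_2)^{d+1}\bigr)\circ R_1\,\mathrm{d}\nu_{W_1}$ for an arbitrary $f\in C(X)$, summing over cells afterwards. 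No auxiliary ambient measure appears; everything is a direct comparison of the two $d$-dimensional relative volume forms.

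Your approach instead introduces a single unit-independent $(d{+}1)$-dimensional lattice-normalized measure $\Omega$ on $\Cone(W)$ and shows that both $\nu_{W_1^{(i)}}$ arise from it via the radial slice formula with Jacobian $t^d$. The key identity then falls out by evaluating one $\Omega$-integral over the truncated cone $\{\bb{u}_2\le1\}$ in two ways, and the only delicate normalization is the pyramid check $\Omega(\text{pyramid over }S)=\nu_{W_1^{(i)}}(S)/(d+1)$, which you correctly derive from Lemma~\ref{ref25}. This is more symmetric and conceptually transparent: the independence of the ambient measure from the choice of unit explains \emph{why} the two slice measures are mutually absolutely continuous, whereas the paper's Jacobian computation establishes the fact without exhibiting a common refinement. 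The paper's method, on the other hand, avoids introducing any new object and stays entirely within the $d$-dimensional framework already set up for Corollary~\ref{ref12}; it also works directly with all $f\in C(X)$, sparing your final density step.
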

 
We devote the rest of this section to the proof of Theorem~\ref{ref2}.
The case $d=0$ is trivial. Indeed, in this case $X$ is a finite set, say $X=\set{p_1,\cdots,p_r}$, in bijection with $E_{u_i}$, for $i=1,2$, via $p_j\mapsto \bigl(h\mapsto h(p_j)/u_i(p_j)\bigr)$. All extremal states are discrete, so $m_i(h)=r\m\sum_{1\le j\le r} h(p_j)/u_i(p_j)$. The measure $\mu_i$ is therefore
$$
\mu_i=\frac{1}{r}\sum_{1\le j\le r}\frac{1}{u_i(p_j)}\delta_{p_j},
$$
where $\delta_{p_j}$ is the Dirac unit measure at $p_j$, and the formula~(\ref{eq8}) is clear.

Assume now $d\ge1$; we shall prove that there exists a constant $C$ such that
\begin{equation}\label{eq11}
\int_X f\ud\mu_2=C\int_X
f\cdot\biggl(\frac{u_1}{u_2}\biggr)^{d+2}\ud\mu_1,
\end{equation}
for every continuous function $f:X\to\Rbb$.
As in the proof of Theorem~\ref{ref1}, we construct a McNaughton representation $\rho:H\to M(W)\p$ for a certain $d$-dimensional polytopal set $W\subseteq\oi^n$.
We adopt the notation in the proof of Theorem~\ref{ref1}; in particular, for $i=1,2$, $\bb{u}_i$ is the homogeneous correspondent of $\rho u_i$, and $W_i=\Cone(W)\cap\set{\bb{u}_i=1}$. We thus have two representations $\rho_i:H\to C(W_i)\p$, given by $\rho_i h=\bb{h}\restriction W_i$.
We construct a rational polytopal complex $\Sigma$
having $W$ as its underlying set and such that both $\rho u_1$ and $\rho u_2$ are affine on each polytope 
of $\Sigma$. Then $\Sigma_i=\set{S_i:S_i=\Cone(S)\cap W_i,\text{ for }S\in\Sigma}$ is a rational polytopal complex and $\bigcup\Sigma_i=W_i$. Let $S\in\Sigma$ be $d$-dimensional, let $b_i$ be the index of $S_i$,
and choose affine isomorphisms $\psi_i$ as in Definition~\ref{ref11} from $\aff(b_iS_i)$ to $\Rbb^d$. We display our data in a diagram
\begin{equation*}
\begin{xy}
\xymatrix{&
b_2S_2 \ar[dl]_{\psi_2} &
S_2 \ar[l]_{b_2} \ar[r] &
W_2 \ar[dr]^{R_2}
& \\
\Rbb^d &
b_1S_1 \ar[l]^{\psi_1} \ar[u]_{G} &
S_1 \ar[l]^{b_1} \ar[u]_P \ar[r] &
W_1 \ar[u]^P \ar[r]_{R_1} &
X
}
\end{xy}
\end{equation*}
The unnamed maps are inclusions, and $P,R_1,R_2$ are the homeomorphisms of Lemma~\ref{ref4}(iii) induced by
$\rho_1$, $\rho_2$, and the identity representation; as in Definition~\ref{ref11} $b_i$ denotes multiplication by $b_i$. We define $G=b_2\circ P\circ b_1\m$; by explicit computation $G(\bb{w})=b_2\bb{w}/\bb{u}_2(\bb{w})$. Everything is commutative, except for the left-hand-side triangle, which in general is not. Let then $F=\psi_2\circ G\circ \psi_1\m$; it is a diffeomorphism between the interiors of the $d$-dimensional polytopes $\psi_1b_1S_1$ and $\psi_2b_2S_2$.

\begin{lemma}
Let\label{ref19} $p$ be a point in the interior of $\psi_1b_1S_1$; then the absolute value $j_F(p)$ of the determinant of the Jacobian matrix of $F$ at $p$ has value
$$
j_F(p)=\biggl(
\frac{b_2}{\bb{u}_2(\psi_1\m p)}
\biggr)^{d+1}.
$$
\end{lemma}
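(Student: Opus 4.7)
The plan is to reduce $j_F(p)$ to a purely linear-algebraic identity by choosing bases that make $\psi_1$, $\psi_2$, and the radial projection $G$ as explicit as possible. Let $V\subseteq\Rbb^{n+1}$ denote the $(d+1)$-dimensional linear span of $\Cone(S)$, so that $V_i:=V\cap\ker(\bb{u}_i)$ is the direction space of $\aff(b_iS_i)$. Since $V_i\cap\Zbb^{n+1}$ is pure in $V\cap\Zbb^{n+1}$, and since $b_i$ is the minimum positive value taken by $\bb{u}_i$ on $V\cap\Zbb^{n+1}$, I would pick $\Zbb$-bases $\bb{m}_1,\ldots,\bb{m}_{d+1}$ and $\bb{m}'_1,\ldots,\bb{m}'_{d+1}$ of $V\cap\Zbb^{n+1}$ whose first $d$ elements span $V_1\cap\Zbb^{n+1}$ and $V_2\cap\Zbb^{n+1}$ respectively, and such that $\bb{u}_1(\bb{m}_{d+1})=b_1$ and $\bb{u}_2(\bb{m}'_{d+1})=b_2$. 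Then $\bb{m}_{d+1}\in\aff(b_1S_1)\cap\Zbb^{n+1}$ and $\bb{m}'_{d+1}\in\aff(b_2S_2)\cap\Zbb^{n+1}$, and I would take them as the basepoints in the construction of $\psi_1,\psi_2$ described in Example~\ref{ref28}.

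With these choices, $\psi_1^{-1}(t_1,\ldots,t_d)=\bb{m}_{d+1}+\sum_{i=1}^d t_i\bb{m}_i$, while $\psi_2$ is the inverse of $(s_1,\ldots,s_d)\mapsto\bb{m}'_{d+1}+\sum_{j=1}^d s_j\bb{m}'_j$. Let $U=(U_{ki})\in\GL_{d+1}\Zbb$ encode the change of basis $\bb{m}_i=\sum_k U_{ki}\bb{m}'_k$, written in block form
\[
U=\begin{pmatrix}A & v\\ w^T & e\end{pmatrix},
\]
with $A\in\Mat_{d\times d}\Zbb$, $v,w\in\Zbb^d$, $e\in\Zbb$. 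For $p=(t_1,\ldots,t_d)$ set $\bb{w}=\psi_1^{-1}(p)$. Expanding $\bb{w}$ in the $\bb{m}'$-basis and using $\bb{u}_2(\bb{m}'_j)=0$ for $j\le d$ and $\bb{u}_2(\bb{m}'_{d+1})=b_2$, one finds $\bb{u}_2(\bb{w})=b_2\beta$ with $\beta=e+w^Tt$. Because $G(\bb{w})=b_2\bb{w}/\bb{u}_2(\bb{w})=\bb{w}/\beta$, the coefficient of $\bb{m}'_{d+1}$ in $G(\bb{w})$ equals $1$, and reading off the first $d$ coefficients yields $F(p)=s=(v+At)/\beta$.

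Differentiating component-wise gives $Ds=\beta^{-1}(A-sw^T)$, so $j_F(p)=\beta^{-d}\abs{\det(A-sw^T)}$. The key step is to evaluate $\det(A-sw^T)$, for which I would use a column-operation trick: adding $t_i$ times column $i$ to column $d+1$ of $U$ (for $i=1,\ldots,d$) preserves $\det U$ but transforms $U$ into
\[
\begin{pmatrix}A & v+At\\ w^T & e+w^Tt\end{pmatrix}=
\begin{pmatrix}A & \beta s\\ w^T & \beta\end{pmatrix}.
\]
Applying the Schur-complement formula to the bottom-right scalar $\beta$ gives $\det U=\beta\det(A-\beta s\cdot\beta^{-1}w^T)=\beta\det(A-sw^T)$. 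Since $\abs{\det U}=1$, we get $\abs{\det(A-sw^T)}=1/\beta$, and therefore $j_F(p)=\beta^{-(d+1)}=\bigl(b_2/\bb{u}_2(\bb{w})\bigr)^{d+1}$, as claimed. The only genuine subtlety I expect is justifying the existence of adapted bases with the prescribed values of $\bb{u}_1,\bb{u}_2$ on $\bb{m}_{d+1},\bb{m}'_{d+1}$; this relies on the purity of $V_i\cap\Zbb^{n+1}$ in $V\cap\Zbb^{n+1}$ together with the definition of the index $b_i$.
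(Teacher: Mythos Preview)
Your argument is correct and complete: the basis adaptations exist for the reasons you indicate (purity of $V_i\cap\Zbb^{n+1}$ in $V\cap\Zbb^{n+1}$, together with the fact that the index $b_i$ equals the positive generator of $\bb{u}_i(V\cap\Zbb^{n+1})$, since $b\aff(S_i)=V\cap\{\bb{u}_i=b\}$), the explicit formula $F(t)=(v+At)/\beta$ follows, and the Schur-complement trick cleanly yields $\abs{\det(A-sw^T)}=\beta^{-1}$. Note also that $\beta>0$ because $\bb{w}=\psi_1^{-1}(p)\in b_1S_1\subset\Cone(S)$ and $\bb{u}_2>0$ there, so no absolute value is needed on $\beta$.

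The paper proceeds quite differently. Rather than computing $F$ in coordinates, it interprets $j_F(p)$ as the limiting volume ratio $\vol(FT)/\vol(T)$ for simplexes $T$ shrinking to $p$, rewrites this as $\nu_{\aff(b_2S_2)}(G\psi_1^{-1}T)/\nu_{\aff(b_1S_1)}(\psi_1^{-1}T)$, and evaluates both relative volumes via Lemma~\ref{ref25}. Because the vertices of $G\psi_1^{-1}T$ are $b_2\bb{w}_j/\bb{u}_2(\bb{w}_j)$, the ratio collapses to $b_2^{d+1}/\prod_j\bb{u}_2(\bb{w}_j)$, and letting the $\bb{w}_j$ converge to $\psi_1^{-1}p$ gives the result. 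Your route is more self-contained (it needs neither Lemma~\ref{ref25} nor the Jacobian-as-volume-ratio characterization) and arguably more elementary, reducing everything to a single $\GL_{d+1}\Zbb$ identity; the paper's route is shorter once Lemma~\ref{ref25} is in hand and stays closer to the measure-theoretic language used in the surrounding section.
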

\begin{proof}
Let $T\subseteq\psi_1b_1S_1$ be a $d$-dimensional simplex, not necessarily rational, containing $p$ in its interior. By~\cite[Theorem~7.24]{rudin87}, $j_F(p)$ is the limit, for $T$ shrinking to $p$, of the ratio $\vol(FT)/\vol(T)$. By definition, that ratio is  $\nu_{\aff(b_2S_2)}(G\psi_1\m T)/\nu_{\aff(b_1S_1)}(\psi\m T)$.
Let $V$ be the $(d+1)$-dimensional real vector space spanned by $S$ inside $\Rbb^{n+1}$. Since $V$ is defined over $\Qbb$, it intersects $\Zbb^{n+1}$ in a free $\Zbb$-module of rank $d+1$; let $\vect {\bb{m}}1{{d+1}}$ be a $\Zbb$-basis for this module. Let $\vect {\bb{w}}1{{d+1}}$ be the vertices of $\psi_1\m T$ and let $(\bb{w}_1\cdots \bb{w}_{d+1})=
(\bb{m}_1\cdots \bb{m}_{d+1})M$ for a certain $M\in\GL_{d+1}\Rbb$. Then 
$\nu_{\aff(b_1S_1)}(\psi\m T)=\abs{\det M}/d!$ by Lemma~\ref{ref25}.

Since the vertices of $G\psi_1\m T$ are $b_2\bb{w}_1/\bb{u}_2(\bb{w}_1),\ldots,
b_2\bb{w}_{d+1}/\bb{u}_2(\bb{w}_{d+1})$, and the argument above applies to $\nu_{\aff(b_2S_2)}$ as well,
one immediately computes
$$
\frac{\nu_{\aff(b_2S_2)}(G\psi_1\m T)}
{\nu_{\aff(b_1S_1)}(\psi\m T)}=
\frac{b_2^{d+1}}{\bb{u}_2(\bb{w}_1)\cdots\bb{u}_2(\bb{w}_{d+1})}.
$$
Letting $T$ shrink to $p$, the $\bb{w}$'s converge to $\psi_1\m p$ and our claim follows.
\end{proof}

Recall that if $L:Y\to Z$ is a continuous map between compact Hausdorff spaces and $\mu$ is a Borel measure on $Y$, then the \newword{push-forward} of $\mu$ by $L$ is the Borel measure $L_*\mu$ on $Z$ defined by $(L_*\mu)(A)=\mu(L\m A)$. Equivalently,
$$
\int_Y f\circ L\ud \mu =
\int_Z f\ud L_*\mu,
$$
for every continuous function $f:Z\to\Rbb$.

By Lemma~\ref{ref4}(iii) applied to $\rho_i$ and the identity representation of $H$, there exists a unit $f_i\in C(W_i)\p$ such that $\bb{h}\restriction W_i=f_i\cdot(h\circ R_i)$ for every $h\in H$ and $i=1,2$. Taking $h=u_i$ we see that 
$$
f_i=\frac{\bb{u}_i\restriction W_i}{u_i\circ R_i}=
\frac{1}{u_i}\circ R_i,
$$
and therefore
$$
\bb{h}\restriction W_i=\frac{h}{u_i}\circ R_i.
$$
Setting $D_i=\nu_{W_i}(W_i)$, we have from Corollary~\ref{ref12}
$$
m_i(h)=\frac{1}{D_i}\int_{W_i}\bb{h}\restriction W_i\ud \nu_{W_i} 
=\frac{1}{D_i}\int_{W_i}\frac{h}{u_i}\circ R_i\ud \nu_{W_i}
=\frac{1}{D_i}\int_X h\cdot\frac{1}{u_i} \ud  R_{i*}\nu_{W_i},
$$
and therefore
\begin{equation}\label{eq10}
\mathrm{d}\mu_i=\frac{1}{D_iu_i}\ud R_{i*}\nu_{W_i}.
\end{equation}

The preliminaries being over, let us fix a continuous function $f:X\to\Rbb$. We choose $S\in\Sigma\mmax(d)$ and compute:
\begin{align*}
\int_{S_2}\frac{f}{u_2}\circ R_2\ud \nu_{W_2} 
&=\int_{S_2}\frac{f}{u_2}\circ R_2\,\Omega_{\aff(S_2)} \\
&=\frac{1}{b_2^{d+1}}\int_{\psi_2b_2S_2}\frac{f}{u_2}\circ R_2\circ b_2\m\circ\psi_2\m\ud \bar{x} \\
&=\frac{1}{b_2^{d+1}}\int_{\psi_2b_2S_2}\frac{f}{u_2}\circ R_1\circ b_1\m\circ G\m\circ\psi_2\m\ud \bar{x} \\
&=\frac{1}{b_2^{d+1}}\int_{\psi_1b_1S_1}\biggl(\frac{f}{u_2}\circ R_1\circ b_1\m\circ G\m\circ\psi_2\m\circ F\biggr)\cdot j_F\ud \bar{x} \\
&=\int_{\psi_1b_1S_1}\biggl(\frac{f}{u_2}\circ R_1\circ b_1\m\circ\psi_1\m\biggr)\cdot
\biggl(
\frac{1}{\bb{u}_2^{d+1}}\circ\psi_1\m\biggr)\ud \bar{x} \\
&=\int_{\psi_1b_1S_1}\biggl(\biggl(\frac{f}{u_2}\circ R_1\circ b_1\m\biggr)\cdot\frac{1}{\bb{u}_2^{d+1}}\biggr)\circ \psi_1\m\ud \bar{x} \\
&=\int_{\psi_1b_1S_1}\biggl(\biggl(\frac{f}{u_2}\circ R_1\biggr)\cdot
\biggl(
\frac{1}{\bb{u}_2^{d+1}}\circ b_1\biggr)\biggr)\circ b_1\m\circ\psi_1\m\ud \bar{x} \\
&=\frac{1}{b_1^{d+1}}\int_{\psi_1b_1S_1}\biggl(\biggl(
\frac{f}{u_2}\circ R_1\biggr)\cdot
\frac{1}{(\bb{u}_2\restriction W_1)^{d+1}}\biggr)\circ b_1\m\circ\psi_1\m\ud \bar{x} \\
&=\int_{S_1}\biggl(\frac{f}{u_2}\circ R_1\biggr)\cdot\frac{1}{(\bb{u}_2\restriction W_1)^{d+1}}\,\Omega_{\aff(S_1)} \\
&=\int_{S_1}\biggl(\frac{f}{u_2}\circ R_1\biggr)
\cdot\frac{1}{(\bb{u}_2\restriction W_1)^{d+1}}\ud \nu_{W_1} \\
&=\int_{S_1} \biggl(\frac{f}{u_2}\cdot\biggl(\frac{u_1}{u_2}\biggr)^{d+1}\biggr)\circ R_1\ud \nu_{W_1}.
\end{align*}
Since $\nu_{W_i}$ is $0$ on $S_i\cap T_i$ for every $S\not=T\in\Sigma\mmax(d)$, we obtain from~(\ref{eq10})
\begin{align*}
\int_X f\ud\mu_2
&=\frac{1}{D_2}\int_X \frac{f}{u_2}\ud R_{2*}\nu_{W_2}\\
&=\frac{1}{D_2}\int_{W_2}\frac{f}{u_2}\circ R_2\ud \nu_{W_2}\\
&=\frac{1}{D_2}\sum_{S\in\Sigma\mmax(d)}
\int_{S_2}\frac{f}{u_2}\circ R_2\ud \nu_{W_2}\\
&=\frac{1}{D_2}\sum_{S\in\Sigma\mmax(d)}
\int_{S_1}\biggl(\frac{f}{u_2}
\cdot\biggl(\frac{u_1}{u_2}\biggr)^{d+1}\biggr)
\circ R_1\ud \nu_{W_1}\\
&=\frac{1}{D_2}\int_{W_1}\biggl(\frac{f}{u_2}
\cdot\biggl(\frac{u_1}{u_2}\biggr)^{d+1}\biggr)
\circ R_1\ud \nu_{W_1}\\
&=\frac{1}{D_2}\int_X \frac{f}{u_1}
\cdot\biggl(\frac{u_1}{u_2}\biggr)^{d+2}
\ud R_{1*}\nu_{W_1}\\
&=\frac{D_1}{D_2}\int_X f
\cdot\biggl(\frac{u_1}{u_2}\biggr)^{d+2}
\ud \mu_1.
\end{align*}
So~(\ref{eq11}) is proved with $C=D_1/D_2$. The expression for $C$ in (ii) follows immediately by taking $f=u_2$ in~(\ref{eq11}). The proof of Theorem~\ref{ref2} is thus complete.

\section{The local dimension at an extreme state}\label{ref29}

It is apparent from the proof of Theorem~\ref{ref1} that the state $m_u$ neglects the behaviour of $h\in H$ on the underdimensional parts of $\MaxSpec H$. Indeed, if $h,g\in H$ are such that $\rho h=\rho g$ on $\bigcup\Sigma\mmax(d)$, then by Corollary~\ref{ref12} $m_u(h)=m_u(g)$, irrespective of the behaviour of $\rho h$ and $\rho g$ on $W\setminus\bigcup\Sigma\mmax(d)$. Since measure-theoretic issues neglect sets of measure $0$, this fact is quite natural. However, if one insists on having states capable of detecting the behaviour of the elements of $H$ on all parts of the spectrum ---the faithful states of~\cite{mundici95}, \cite{mundici08}--- then our construction can be adapted.

First of all, let us recall the inductive definition of the local dimension of a topological space at a point~\cite[Chapter III]{hurewiczwallman41}:
\begin{enumerate}
\item The only set of dimension $-1$ is the empty set.
\item A space has local dimension $\le n$ at a point $p$ if $p$ has arbitrarily
small neighborhoods whose boundaries have dimension $\le n-1$.
\item A space has dimension $\le n$ if it has local dimension $\le n$ at each
of its points.
\item A space has local dimension $n$ at $p$ if it has local dimension $\le n$
at $p$, and does not have local dimension $\le n-1$ at $p$.
\item A space has dimension $n$ if it has dimension $\le n$ and does not have dimension $\le n-1$.
\item A space has dimension $\infty$ if it does not have dimension $\le n$ for any $n$.
\end{enumerate}

In the proof of Theorem~\ref{ref15} we already made use of the fact that the dimension of a polytopal set coincides with its topological dimension (note that throughout
this paper the unqualified word ``dimension'' always refers to the affine dimension). The following lemma provides a local version of that fact.

\begin{lemma}
Let\label{ref22} $W=\bigcup\Sigma$ be a $d$-dimensional polytopal set. For every $0\le l\le d$, let
\begin{align*}
W^l&=\bigcupp\Sigma\mmax(l)\setminus
\bigcupp\set{S\in\Sigma:\dim(S)>l},\\
L^l&=\set{\bb{w}\in W:\text{the local dimension of $W$ at $\bb{w}$ is $l$}}.
\end{align*}
Then
$$
W^l=L^l,
$$
for every $l$. In particular, the partition
$$
\set{W^l:W^l\not=\emptyset}
$$
of $W$ does not depend on $\Sigma$.
\end{lemma}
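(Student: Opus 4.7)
The plan is to fix a polytopal complex $\Sigma$ with underlying set $W$, and for each $\bb{w}\in W$ to denote by $F\in\Sigma$ the unique polytope with $\bb{w}\in\relint(F)$. Set $m(\bb{w})=\max\{\dim S:S\in\Sigma,\,F\subseteq S\}$, which is the largest dimension of a polytope in $\Sigma$ containing $\bb{w}$. I would show
$$
\bb{w}\in W^l \iff m(\bb{w})=l \iff W\text{ has local dimension }l\text{ at }\bb{w},
$$
from which $W^l=L^l$; the $\Sigma$-independence of $\{W^l\}$ then follows because $L^l$ is defined purely topologically.

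The first equivalence is a direct combinatorial unpacking. If $m(\bb{w})=l$, any $S\ni\bb{w}$ of dimension $l$ is properly contained in no polytope of $\Sigma$, hence lies in $\Sigma\mmax(l)$, so $\bb{w}\in\bigcup\Sigma\mmax(l)$; simultaneously no $S'\in\Sigma$ with $\dim S'>l$ can contain $\bb{w}$, so $\bb{w}\in W^l$. Conversely $\bb{w}\in W^l$ gives $m(\bb{w})\ge l$ from its first clause and $m(\bb{w})\le l$ from its second.

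For the topological equivalence I would prove both inequalities. For the upper bound, take an open Euclidean ball $B$ of radius $\epsilon$ centered at $\bb{w}$, with $\epsilon$ small enough that $B$ meets only polytopes of $\Sigma$ containing $\bb{w}$ and so that the sphere $\partial B$ meets each relevant affine span $\aff(S)$ transversely. Then $B\cap W$ is an arbitrarily small open neighborhood of $\bb{w}$ in $W$ whose boundary in $W$ is contained in $(\partial B)\cap W=\bigcup\{S\cap\partial B:S\in\Sigma,\,F\subseteq S\}$; each piece $S\cap\partial B$ is a topological manifold with corners of dimension $\dim(S)-1\le m(\bb{w})-1$, so by the additivity of topological dimension on finite unions of closed sets the whole boundary has topological dimension $\le m(\bb{w})-1$. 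Hence $W$ has local dimension $\le m(\bb{w})$ at $\bb{w}$. For the lower bound, pick $S\in\Sigma$ with $\dim S=m(\bb{w})$ and $\bb{w}\in S$; since $\relint(S)$ is a Euclidean open subset of $\aff(S)$ and $\bb{w}\in\overline{\relint(S)}$, the polytope $S$ has local dimension $m(\bb{w})$ at $\bb{w}$, and the standard monotonicity of local topological dimension under passing to a superspace forces $W$ to have local dimension $\ge m(\bb{w})$ at $\bb{w}$.

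The main obstacle is making the transversality argument in the upper bound fully rigorous, because spheres are not polytopal and $(\partial B)\cap W$ is not a priori polytopal; one fixes this either by genericity of $\epsilon$ together with the implicit function theorem on each $\aff(S)$, or by replacing $B$ with a small rational polytopal neighborhood of $\bb{w}$ in general position with respect to $\Sigma$, which keeps the boundary polytopal and allows a direct induction on $m(\bb{w})$ (the base case $m(\bb{w})=0$ being immediate from the fact that $\bb{w}$ is then isolated in $W$). Once either resolution is in place, the two inequalities combine to give that the local dimension of $W$ at $\bb{w}$ equals $m(\bb{w})$, closing the argument.
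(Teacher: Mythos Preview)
Your strategy differs organizationally from the paper's: you argue both inequalities for the local dimension at $\bb{w}$ via the auxiliary quantity $m(\bb{w})$, whereas the paper first observes that $\{W^l\}$ and $\{L^l\}$ are both partitions of $W$, so that only the single inclusion $W^l\subseteq L^l$ needs proof; it then bounds the local dimension from above by citing the equality of polytopal and topological dimension, and from below by a direct application of Mazurkiewicz's theorem on sets separating a region in~$\Rbb^l$.

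Your first equivalence and your upper bound are correct, and in fact the transversality worry is misplaced: since $B$ is centred at $\bb{w}\in\aff(S)$, the set $\partial B\cap\aff(S)$ is automatically a sphere of dimension $\dim(S)-1$ for every $S\ni\bb{w}$, so $(\partial B)\cap S$ lies inside a set of dimension at most $m(\bb{w})-1$ with no genericity required.

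The genuine gap is in the lower bound. You infer that $S$ has local dimension $m=m(\bb{w})$ at $\bb{w}$ from the facts that $\relint(S)$ is open in $\aff(S)\cong\Rbb^{m}$ and $\bb{w}\in\overline{\relint(S)}$. That inference is invalid: lying in the closure of an open $m$-dimensional set does not by itself force local dimension~$m$. For instance, in
\[
X=\{(0,0)\}\cup\bigcup_{n\ge1}\overline{B}\bigl((1/n,0),\,n^{-3}\bigr)\subset\Rbb^{2}
\]
the origin lies in the closure of a $2$-dimensional open set, yet has local dimension~$0$ because it admits arbitrarily small clopen neighbourhoods. The conclusion \emph{is} true for a convex polytope~$S$, but establishing it at a point of a proper face requires precisely the kind of separation argument the paper makes explicit: any small open $U\ni\bb{w}$ in $S$ has $\partial_S U$ separating two nonempty open pieces of $\relint(S)$, and Mazurkiewicz's theorem then gives $\dim(\partial_S U)\ge m-1$. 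So your route does not bypass the nontrivial dimension-theoretic input; it merely relocates it into a step left unjustified. Either cite, with a reference that actually does the work, the fact that a convex body has local dimension equal to its affine dimension at every one of its points, or supply the Mazurkiewicz argument directly as the paper does.
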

\begin{proof}
Since both $\set{W^l:W^l\not=\emptyset}$ and $\set{L^l:L^l\not=\emptyset}$
are partitions of $W$, it clearly suffices to show that $W^l\subseteq L^l$ for every $l$.
Let $\bb{w}\in W^l$.
Then $\bb{w}$ does not belong to any element of $\Sigma$ of dimension $>l$, and hence there exists an open ball $B\subset\Rbb^{n+1}$ centered at $\bb{w}$ and such that $B\cap W=B\cap W'$, where $W'=\bigcup\set{S\in\Sigma:\dim(S)\le l}$ is an $l$-dimensional polytopal set. Thus the local dimension of $W$ at $\bb{w}$ agrees with the local dimension of $W'$ at $\bb{w}$, which is $\le l$ again by~\cite[pp.~101--102]{engelking78}. We thus have to show that the local dimension of $W'$ at $\bb{w}$ is not $\le l-1$.
If $l=0$ this is clear, so assume $l>0$ and choose $S\in\Sigma\mmax(l)$ such that $\bb{w}\in S$. Choose a ball $B$ as above such that $\relint(S)$ is not contained in the closure of $B$. It is then enough to show the following:
\begin{itemize}
\item[(A)] Let $U$ be an open set in $W'$ such that $\bb{w}\in U\subseteq B$, and write $M$ for the boundary of $U$ in $W'$. Then $M\cap\relint(S)$ contains a point at which the local dimension of $M$ is not $\le l-2$.
\end{itemize}
Let $U$ be as in (A) and let $\bar U$ be its closure in $W'$. Then $\relint(S)$ is the disjoint union of $\relint(S)\cap U$, $\relint(S)\cap M$, and $\relint(S)\setminus\bar U$. By construction there exist points $\bb{p}\in\relint(S)\cap U$ and
$\bb{q}\in\relint(S)\setminus\bar U$. It follows that 
every continuum (i.e., closed connected set) $C\subseteq\relint(S)$ containing $\bb{p}$ and $\bb{q}$
must intersect $M$, for otherwise
$(C\cap U)\cup(C\setminus\bar U)$ would be a nontrivial disconnection of $C$. By Mazurkiewicz's Theorem~\cite[Theorem~1.8.19]{engelking78} applied to the open region $\relint(S)$ inside $\aff(S)\simeq\Rbb^l$, the topological dimension of $M\cap\relint(S)$ is not $\le l-2$. Therefore there exists a point as required by (A).
\end{proof}

We shall need the measures $\nu_W^l$ of Definition~\ref{ref21}.
Note that, since $\nu_W^l$ is supported on $\bigcup\Sigma\mmax(l)$, and $\bigcup\Sigma\mmax(l)\setminus W^l$ is a $\nu_W^l$-nullset, we have
$$
\int_W f\ud\nu_W^l=\int_{\bigcup\Sigma\mmax(l)} f\ud\nu_W^l=\int_{W^l} f\ud\nu_W^l,
$$
for every Riemann-integrable function $f:W\to\Rbb$.

Assume now the hypotheses and the notation of Theorem~\ref{ref1}; we thus have a McNaughton representation $\rho:H\to M(W)\p$ and a homeomorphism $F:E_u\to W_1=\Cone(W)\cap\set{\bb{u}=1}$ as in Lemma~\ref{ref6}. By Lemma~\ref{ref22}, for every $0\le l\le d=\dim(E_u)$, $F$ restricts to a bijection between $E_u^l=\{e\in E_u:$ the local dimension of $E_u$ at $e$ is $l\}$ and $W_1^l$.
Let $0\le l_1<l_2<\cdots<l_q=d$ be the dimensions at which $E_u^{l_i}\not=\emptyset$. For each $1\le i\le q$, let $e^i_1,e^i_2,e^i_3,\ldots$ be an enumeration without repetitions of all the discrete states in $E_u^{l_i}$ according to nondecreasing denominators. Choose a \newword{weight vector} $p=(p_1,\ldots,p_q)$ (i.e., $p\in\Rbb\p^q$ and $\sum p_i=1$).

\begin{theorem}
For\label{ref23} every $i=1,\ldots,q$ and every $h\in H$ the limit
\begin{equation}\label{eq15}
m_u^{l_i}(h)=\lim_{k\to\infty}
\frac{1}{k}\sum_{t\le k}e^i_t(h)
\end{equation}
exists and does not depend on the enumeration.
The function $m_u^p:H\to\Rbb\p$ defined by
\begin{equation}\label{eq12}
m_u^p(h)=\sum_i p_i m_u^{l_i}(h)
\end{equation}
is an automorphism-invariant state, which is \newword{faithful} (i.e., $h\not=0$ implies $m_u^p(h)\not=0$) iff $p_i>0$ for every $i$.
\end{theorem}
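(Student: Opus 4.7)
The strategy is to stratify the discrete extremal states by the local dimension of $E_u$ (a topological invariant, by Lemma~\ref{ref22}) and then apply the proof of Theorem~\ref{ref1} separately to each stratum. Fix a McNaughton representation $\rho:H\to M(W)\p$ and a polytopal complex $\Sigma$ with $\bigcup\Sigma=W$; via the homeomorphism $F$ of Lemma~\ref{ref6} and Lemma~\ref{ref22}, the subset $E_u^{l_i}$ corresponds to $W_1^{l_i}$, where $W_1^{l_i}$ is the $l_i$-dimensional locus of $W_1$ (the relatively open parts of the $l_i$-dimensional polytopes of $\Sigma$ that are maximal in $W$).

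For each $i$ with $l_i\ge 1$, I would redo the argument of Theorem~\ref{ref1} enumerating only those primitive points of $\Cone(W)$ that project, under $\bb{w}\mapsto\bb{w}/\bb{u}(\bb{w})$, into $W_1^{l_i}$. Lemma~\ref{ref5} applied to each $l_i$-dimensional polytope of $\Sigma$, together with the observation that the contributions from the lower-dimensional ``junk'' $\bigcup\Sigma\mmax(l_i)\setminus W^{l_i}$ are $o(\Phi_{l_i}(ck))$, yields the asymptotic
$$
\Xi_i(W,ck)=\nu_{W_1}^{l_i}(W_1^{l_i})\Phi_{l_i}(ck)+o(\Phi_{l_i}(ck))
$$
for a suitable common multiple $c$ of the relevant $l_i$-indices, where $\Xi_i(W,t)$ denotes the corresponding point count up to $\bb{u}\le t$. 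The block-partition argument of Theorem~\ref{ref1}, combined with Lemmas~\ref{ref7}(ii) and~\ref{ref10}, gives the existence of the limit~\eqref{eq15}, its independence from the enumeration, and the Corollary~\ref{ref12}-style representation
$$
m_u^{l_i}(h)=\frac{1}{\nu_{W_1}^{l_i}(W_1^{l_i})}\int_{W_1}\bb{h}\restriction W_1\ud\nu_{W_1}^{l_i},
$$
which exhibits $m_u^{l_i}$ as a state on $(H,u)$; the case $l_i=0$ reduces to the $d=0$ case of Theorem~\ref{ref1}, the same formula now read with $\nu_{W_1}^0$ as counting measure on the finite set of isolated points. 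Automorphism invariance of each $m_u^{l_i}$ is proved verbatim as in Theorem~\ref{ref1}, since any automorphism $\sigma$ of $H$ fixing $u$ induces a self-homeomorphism of $E_u$ preserving local dimension, discreteness and denominators, and thus permutes the enumeration of each $E_u^{l_i}$ within each denominator block. The convex combination $m_u^p=\sum_ip_im_u^{l_i}$ is therefore an automorphism-invariant state on $(H,u)$.

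For faithfulness: if all $p_i>0$ and $h\neq 0$, then $\bb{h}(\bb{w}_0)>0$ for some $\bb{w}_0\in W$ whose local dimension is some $l_i$; by continuity $\bb{h}>0$ on an open subset of $W_1^{l_i}$ of positive $\nu_{W_1}^{l_i}$-measure, whence $m_u^p(h)\ge p_im_u^{l_i}(h)>0$. Conversely, if $p_j=0$, pick $\bb{w}_0\in\relint(S)$ for some $S\in\Sigma\mmax(l_j)$; since $\Sigma$ is finite and $\bb{w}_0$ lies in the relative interior of $S$ only, there is a small open ball $B\subset\Rbb^{n+1}$ around $\bb{w}_0$ disjoint from every element of $\Sigma\mmax(l_i)$ with $i\neq j$. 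Using the separating power of $M(W)\p$, one produces a nonzero $h\in H$ with $\bb{h}$ supported in $B\cap W$; then $\bb{h}$ vanishes on $W^{l_i}$ for every $i\neq j$, so $m_u^{l_i}(h)=0$ for $i\neq j$ and $m_u^p(h)=p_jm_u^{l_j}(h)=0$. The main obstacle is the stratified counting in Step~1: one must check carefully that the analogue of Lemma~\ref{ref5} passes to each stratum, i.e., that primitive points lying over the underdimensional portion of $\bigcup\Sigma\mmax(l_i)$ contribute only $o(\Phi_{l_i}(ck))$, so that the block-partition method still gives a genuine Borel probability measure on each $W_1^{l_i}$.
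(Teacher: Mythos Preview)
Your proposal is correct and follows the same overall architecture as the paper, but the implementation of the existence of the limit~\eqref{eq15} is organized differently. Rather than re-running the primitive-point counting on each stratum, the paper passes to the auxiliary finitely presented hoop $M(\Wbar)\p$, where $\Wbar=\bigcup\Sigma\mmax(l)$, and applies Theorem~\ref{ref1} there in full. The discrepancy between $\Wbar_1$ and $W_1^l$ is then handled by a sandwich argument: the missing states (those corresponding to points of $\Wbar_1$ whose local dimension \emph{in $W_1$} exceeds $l$) are reinserted into the enumeration, and since they lie in a $\nu_{W_1}^l$-nullset, bounded functions $f^{\pm}$ agreeing with $\bb{h}$ on $W_1^l$ and equal to $0$ (resp.~$M$) elsewhere squeeze the partial averages to the same limit via~\cite[Exercise~1.12, p.~179]{kuipersnie74}. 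Your direct route---estimating the contribution of $\bigcup\Sigma\mmax(l_i)\setminus W^{l_i}$ as $o(\Phi_{l_i}(ck))$---is also valid, and arguably more transparent, but it forces you to re-verify that Lemmas~\ref{ref7} and~\ref{ref10} apply to the non-closed locus $W_1^{l_i}$; the paper's approach sidesteps this by recycling Theorem~\ref{ref1} wholesale on a genuine polytopal set.

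For the backward direction of faithfulness, your appeal to ``the separating power of $M(W)\p$'' is too quick: point-separation does not by itself yield functions with prescribed compact support, let alone McNaughton functions with integer linear pieces. The paper constructs an explicit Schauder-hat function: it chooses a $\Zbb$-basis $\bb{m}_1,\ldots,\bb{m}_{l_j+1}$ of the lattice in the span of $S$ whose positive rays meet $\relint(S)$, and builds a piecewise-linear $\bb{h}$ supported on the cone over the resulting simplex $T\subset\relint(S)$, with $\bb{h}(\sum\bb{m}_k)=1$. Your geometric setup (the ball $B$ disjoint from all $\Sigma\mmax(l_i)$ with $i\neq j$) is right, but you should indicate how such an $h$ is actually produced.
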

\begin{proof}
Every convex combination of automorphism-invariant states is clearly an automorphism-invariant state, so it suffices to show that every $m^{l_i}_u$ is such a state. Fix then $0\le l\le d$ such that $E^l_u\not=\emptyset$, and say that $W$ is the underlying set of the polytopal complex $\Sigma$.
Let $\Wbar=\bigcup\Sigma\mmax(l)$; then $\Wbar$ is an $l$-dimensional rational polytopal set which, by Lemma~\ref{ref20}(iii), depends on $W$ and $l$ only. The restriction map $\rho h\mapsto\rho h\restriction\Wbar$ is a homomorphism from $M(W)\p$ to $M(\Wbar)\p$, and the latter hoop is finitely presented because $\Wbar$ is rational.
Every element of the enumeration $e^l_1,e^l_2,e^l_3,\ldots$ provided by the hypotheses corresponds to a point of $\Wbar$, and can thus be seen as a discrete extremal state of $M(\Wbar)\p$, with preservation of denominators. Moreover, all 
discrete extremal states of $M(\Wbar)\p$ appear in the enumeration, except those corresponding to 
points of $\Wbar$ whose local dimension \emph{as points of $W$} is $>l$ (for example, for $l=1$ in the polytopal set $W=S_1\cup
\cdots\cup S_5$ of Example~\ref{ref28}, we have $\Wbar=S_2\cup S_3$, $W^1=\Wbar\setminus S_1$, and the point of intersection of $S_3$ with $S_1$ is the only such point).

Let $\Wbar_1=\Cone(\Wbar)\cap\set{\bb{u}=1}$, $W_1^l=\Cone(W^l)\cap\set{\bb{u}=1}$, and write for simplicity's sake $f=\bb{h}\restriction W_1$. Let $M>0$ be an upper bound for $f$, and let $f^-,f^+$ be the functions (in general not continuous) which agree with $f$ on $W_1^l$ but are identically $0$ (respectively, $M$) on $W_1\setminus W_1^l$.
We construct a denominator-nondecreasing sequence $\bb{w}_1,\bb{w}_2,\bb{w}_3,\ldots$ of all rational points of $\Wbar_1$, containing a subsequence $\bb{w}_{k(1)},\bb{w}_{k(2)},\bb{w}_{k(3)},\ldots$ such that $e_t^l$ corresponds to $\bb{w}_{k(t)}$, for every $t$ (this just amounts to ``reinserting'' the missing states in the given enumeration). It is clear that we have, for every $t$,
\begin{equation}\label{eq18}
\frac{1}{k(t)}\sum_{i=1}^{k(t)}f^-(\bb{w}_i)\le
\frac{1}{t}\sum_{i=1}^{t}e^l_t(h)\le
\frac{1}{k(t)}\sum_{i=1}^{k(t)}f^+(\bb{w}_i).
\end{equation}
Now, the reinserted points (i.e., the points in the sequence $(\bb{w}_i)$ which do not belong to the subsequence) are all contained in $\Wbar_1\setminus W_1^l$, which is a $\nu_{W_1}^l$-nullset.
By~\cite[Exercise~1.12 p.~179]{kuipersnie74} and Theorem~\ref{ref1} applied to $\bigl(M(\Wbar)\p,\bb{u}\restriction\Wbar\bigr)$, the two extreme terms of~\eqref{eq18} converge as $t\to\infty$ to $\bigl(\nu_{W_1}^l(W_1)\bigr)\m\int f\ud\nu_{W_1}^l$. Therefore the middle term converges to the same value, as was to be shown. The same argument as in Theorem~\ref{ref1} shows the automorphism-invariance of~$m_u^l$.

Concerning our last statement, the proof of right-to-left implication is analogous to the proof of the faithfulness of the state $s$ in~\cite[Theorem~4.1]{mundici08}. Namely, assume $\vect p1q>0$ and let $0\not=h\in H$, with the intent of proving $m_u^p(h)>0$. As noted after Corollary~\ref{ref12} we can realize $W_1$ as the underlying set of a rational simplicial complex $\Delta$ such that $\bb{h}\restriction S$ is affine for every $S\in\Delta$.
Applying Corollary~\ref{ref12} and Lemma~\ref{ref25}, we obtain
\begin{align}\label{eq17}
m_u^p(h)&=
\sum_{i=1}^q p_i\frac{1}{\nu_{W_1}^{l_i}(W_1)}
\int_{W_1}\bb{h}\restriction W_1 \ud\nu_{W_1}^{l_i}\\
&=\sum_{i=1}^q\Biggl( \frac{p_i}{\nu_{W_1}^{l_i}(W_1)}
\sum_{S\in\Delta\mmax(l_i)} \nu_{W_1}^{l_i}(S) \overline{\bb{h}}(S)\Biggr),\label{eq19}
\end{align}
where $\overline{\bb{h}}(S)$ is the arithmetical average of $\bb{h}$ over the vertices of $S$. All terms $p_i$,  $\nu_{W_1}^{l_i}(W_1)$, and $\nu_{W_1}^{l_i}(S)$ in~\eqref{eq19} are $>0$. Since $h\not=0$ implies $\bb{h}>0$ on at least one vertex of at least one $S\in\Delta$, we have $m_u^p(h)>0$, as desired.

For the reverse implication, let $p_i=0$ for some $1\le i\le q$, and let $\Delta$ be a rational simplicial complex supported on $W_1$. Since $E_u^{l_i}\not=\emptyset$,
$\Delta\mmax(l_i)$ contains a simplex, say $S$. 
Using elementary algebra we construct a $\Zbb$-basis $\bb{m}_1,\ldots,\bb{m}_{l_i+1}$ of $\Zbb^{n+1}\cap(\text{the $\Rbb$-subspace spanned by $S$})$ such that the ray $\Rbb\p\bb{m}_j$ intersects $S$ in its relative interior, for every $j$. Let $T$ be the intersection of $S$ with the cone  spanned positively by $\bb{m}_1,\ldots,\bb{m}_{l_i+1}$; let also $\bb{w}=\bb{m}_1+\cdots+\bb{m}_{l_i+1}$. Since $S\in\Delta\mmax(l_i)$ and $T$ is contained in the relative interior of $S$, the simplex $T$ has empty intersection with every element of $\Delta$ different from $S$.
It readily follows (compare with Mundici's theory of Schauder hats~\cite[\S5.3]{mundici11}) that the function $\bb{h}:\Cone(W_1)\to\Rbb\p$ defined by:
\begin{itemize}
\item $\bb{h}=0$ outside of $\Cone(T)$;
\item $\bb{h}(\bb{m}_1)=\cdots=\bb{h}(\bb{m}_{l_i+1})=0$;
\item $\bb{h}(\bb{w})=1$;
\item for each $1\le i\le l_{i}+1$, $\bb{h}$ is linear on the cone spanned positively by
$\bb{m}_1,\ldots,\bb{m}_{i-1},\bb{w},\bb{m}_{i+1},\ldots,\bb{m}_{l_i+1}$;
\end{itemize}
is continuous and piecewise-linear with integer coefficients.
Let $h$ be the unique element of $H$ such that $\bb{h}$ is the homogeneous correspondent of $\rho h$. Then $h\not=0$ and, since $p_i=0$ and $\bb{h}=0$ outside $\Cone(T)$, we have $m_u^p(h)=0$ by~\eqref{eq17}, so $m_u^p$ is not faithful.
\end{proof}

\begin{remark}
In\label{ref26} the specific case $u=\one$ and $p=(1/q,\ldots,1/q)$, the state $m_\one^p$ of Theorem~\ref{ref23} coincides with the state $s$ of~\cite[Theorem~4.1]{mundici08}, provided that $\MaxSpec H$ does not contain isolated points. Indeed, 
let $h\in H=M(W)\p$ for some $d$-dimensional rational polytopal set $W\subseteq\oi^n$; since $u=\one$ we have $W_1=W$ and $\bb{h}\restriction W_1=h$. 
Choose a unimodular complex $\Delta$ supported on~$W$ (see~\cite{mundici08} for all relevant definitions), such that $h$ is affine on each simplex of $\Delta$.
Let $T$ be an $l$-dimensional unimodular simplex contained in $\bigcup\Delta\mmax(l)$ (not necessarily belonging to $\Delta$), and let $\vect{\bb{w}}1{{l+1}}$ be the vertices of $T$. By the unimodularity assumption $(\den(\bb{w}_1)\bb{w}_1\,\cdots\,
\den(\bb{w}_{l+1})\bb{w}_{l+1})$ is a $\Zbb$-basis for $\Zbb^{n+1}\cap(\text{the $\Rbb$-subspace spanned by $T$})$.
Therefore by Lemma~\ref{ref25}
$$
\nu_W^l(T)=
\nu_{\aff(T)}(T)=\frac{1}{l!\den(\bb{w}_1)\cdots\den(\bb{w}_{l+1})}.
$$
Since the unimodular simplexes generate the Borel $\sigma$-algebra, $\nu_W^l$ is the measure $\lambda_l$ of~\cite[Theorem~2.1]{mundici08}. 
Therefore the identity~\eqref{eq19}, which in our specific case reads
$$
m_\one^p(h)=\frac{1}{q}\sum_{i=1}^q
\Biggl(\frac{1}{\nu_W^{l_i}(W)}
\sum_{S\in\Delta\mmax(l)}\nu_W^{l_i}(S)\overline{h}(S)\Biggr),
$$
agrees with the expression~(4) on~\cite[p.~542]{mundici08} for the state~$s$.

If $W$ contains isolated points, say $W^0=\set{\bb{w}_1,\ldots,\bb{w}_r}\not=\emptyset$, then the situation is slightly different. Indeed, the measure $\mu^0$ corresponding to our $m_\one^0$ is
$$
\mu^0=\frac{1}{r}\sum_{1\le j\le r}\delta_{\bb{w}_j},
$$
while the measure $\tau^0$ determined by~(4) on~\cite[p.~542]{mundici08} is
$$
\tau^0=\biggl(\sum_{1\le j\le r}\frac{1}{\den(\bb{w}_j)}\biggr)\m
\cdot
\sum_{1\le j\le r}\frac{1}{\den(\bb{w}_j)}\delta_{\bb{w}_j}.
$$
In general $\mu^0$ and $\tau^0$ are different, but they both are automorphism-invariant, since every automorphism of $(H,\one)$ must act on $W^0$ in a denominator-preserving way.

For example, let again $W=S_1\cup\cdots\cup S_5$ be the polytopal set of Example~\ref{ref28}, $S_4=\set{\bb{v}}$, $S_5=\set{\bb{w}}$. Then $\den(\bb{v})=2$, $\den(\bb{w})=7$, $W^0=\set{\bb{v},\bb{w}}$; take \hbox{$h=\bb{x}_1\restriction W$} to be the projection on the first coordinate. Then
\begin{align*}
\int_W h\ud\mu^0&=\frac{1}{2}\bigl(h(\bb{v})+h(\bb{w})\bigr) \\
&=\frac{1}{2}\Biggl(\frac{1}{2}+\frac{2}{7}\Biggr)=\frac{11}{28},
\end{align*}
while
\begin{align*}
\int_W h\ud\tau^0&=\Biggl(\frac{1}{2}+\frac{1}{7}\Biggr)\m
\Biggl(\frac{h(\bb{v})}{2}+\frac{h(\bb{w})}{7}\Biggr) \\
&=\frac{14}{9}\Biggl(\frac{1}{4}+\frac{2}{49}\Biggr)=\frac{19}{42}.
\end{align*}
\end{remark}

The hidden reason for the rigidity of the invariant measure on the higher-dimensional parts of the spectrum, together with its relative flexibility on the \hbox{$0$-dimensional part}, is subtle: the $0$-dimensional part is made of finitely many isolated discrete extremal states, while the higher-dimensional parts contain infinitely many discrete extremal states. Now, in a finite average the input data can be rearranged arbitrarily without affecting the final result, a fact which is definitely false for infinite averages. Actually, a classical result by Descovich (but dating back to von Neumann for the real unit interval~\cite[Theorems~II.4.4 and III.2.5]{kuipersnie74}) guarantees that \emph{every} regular Borel measure on a compact Hausdorff space with no isolated points can be realized as in Lemma~\ref{ref10}(ii) by enumerating appropriately the elements of any given countable everywhere dense subset. In our case, the set of all discrete extremal states is such a subset, and in Theorem~\ref{ref1} the key constraint on the resulting measure is the denominator-nondecreasing condition, which becomes irrelevant precisely on the $0$-dimensional part of the spectrum. As that condition depends crucially on the choice of a unit, it is apparent that units and measures are inextricably intertwined.


\end{document}